\DeclareSymbolFont{symbols3}{LS1}{stixbb}{m}{n}
\DeclareMathSymbol{\bigslopedvee}{\mathbin}{symbols3}{"A7}
\newcounter{fig}
\newcounter{clai}
\newcounter{exa}
\newcounter{theorem}[section]
\renewcommand{\thetheorem}{\arabic{section}.\arabic{theorem}}
\newcounter{nona}[theorem]
\newcounter{nonanona}[theorem]
\renewcommand{\thenona}{\Alph{nona}}
\renewcommand{\thenonanona}{\alph{nonanona}}
\newenvironment{theorem}{\begin{trivlist}\item[]\refstepcounter{theorem}%
        {\bf\thetheorem\ Theorem}\par\nobreak\noindent\sl\ignorespaces}{%
        \ifvmode\smallskip\fi\end{trivlist}}
\newenvironment{noname}{\begin{trivlist}\item[]\refstepcounter{nona}%
        {\bf (\thenona)\ \ \ }\nobreak\noindent\sl\ignorespaces}{%
        \ifvmode\smallskip\fi\end{trivlist}}
\newenvironment{lemma}{\begin{trivlist}\item[]\refstepcounter{theorem}%
        {\bf\thetheorem\ Lemma}\par\nobreak\noindent\sl\ignorespaces}{%
        \ifvmode\smallskip\fi\end{trivlist}}
\newenvironment{conjecture}{\begin{trivlist}\item[]%
        \refstepcounter{theorem}{\bf\thetheorem\ Conjecture}\par%
        \nobreak\noindent\sl\ignorespaces}{%
        \ifvmode\smallskip\fi\end{trivlist}}
\newenvironment{conjectureplus}[1]{\begin{trivlist}\item[]%
        \refstepcounter{theorem}{\bf\thetheorem\ Conjecture} %
        {\rm(\,#1\,)}\par\nobreak\noindent\sl\ignorespaces}{%
        \ifvmode\smallskip\fi\end{trivlist}}
\newenvironment{observation}{\begin{trivlist}\item[]\refstepcounter{theorem}%
        {\bf\thetheorem\ Observation\ }}{%
        \ifvmode\smallskip\fi\end{trivlist}}
\newcommand{\wideitem}[1]{\leavevmode\hangindent\mathindent\noindent%
        \hbox to \mathindent{#1\hfil}\ignorespaces}
\renewcommand{\emptyset}{\mathchar"001F}
\newcommand{\eop}{\rule{1.4ex}{1.4ex}}
\newcommand{\subandsup}{%
\mathrel{%
\vcenter{\offinterlineskip\ialign{##\cr $\subset$\cr\noalign{\kern-.5pt}$\supset$ \cr }%
}%
}%
}%
\newcommand{\subandsupeq}{%
\mathrel{%
\vcenter{\offinterlineskip\ialign{##\cr $\subseteq$\cr\noalign{\kern-.5pt}$\supseteq$ \cr }%
}%
}%
}%
\newcommand{\C}{{\cal C}}
\newcommand{\Q}{{\cal Q}}
\newcommand{\parallelbowtie}{\ \mathrel{\bowtie \! \! \! \!\parallel}\ \ }
\newcommand{\sms}{\vspace*{.2in}}
\title{\bf Skew circuits and circumference in a binary matroid}
\author{Sean McGuinness \\ Dept. of Mathematics\\ Thompson Rivers University\\
McGill Road, Kamloops BC\\ V2C5N3 Canada\\ email: smcguinness@tru.ca}
\date{}
\begin{document}

\maketitle

\begin{abstract}
The {\bf circumference} of a matroid is defined to be the size of the largest circuit.  For disjoint sets $X$ and $Y$ in a matroid $M$ having ground set $E$ we define $\kappa_M(X,Y) = \min_{X \subseteq A \subseteq E-Y} r(A) + r(E-A) - r(M).$
Let $C_1$ and $C_2$ be two disjoint circuits where $r(C_1 \cup C_2) = r(C_1) + r(C_2)$ in a binary matroid $M$ having circumference $c$.  We show that for every nonnegative integer $k$, there is an integer $\alpha(k)$ such that if $\kappa_M(C_1,C_2) \ge \alpha(k)$, then $|C_1| + |C_2| \le 2c-k.$

\bigskip\noindent

{\sl AMS Subject Classifications (2012)}\,: 05D99,05B35.
\end{abstract}

\section{Introduction}
Many properties of cycles in graphs have their counterparts for circuits in matroids.  Our starting point for investigation, is understanding in what way the property of intersection of long cycles in graphs can be extended to matroids.  For graphs, connectivity plays a significant role in forcing long cycles to intersect and of particular interest here is an old  conjecture of Smith (see \cite{Bon}):

\begin{conjectureplus}{Smith, 1984}
If $C$ and $D$ are longest cycles in a $k$-connected graph, where $k\ge 2,$ then $C$ and $D$ meet in at least $k$ vertices.
\end{conjectureplus}

The conjecture remains unsolved, and perhaps the best known result related to it can be found in \cite{CheFauGou}, where it is shown that two longest cycles in a $k$-connected  
graph must meet in at least $ck^{\frac 35}$ vertices ( $c \approx 0.2615$).  The key tool here is Menger's theorem (see \cite{Bon}) which guarantees the existence of many vertex-disjoint paths between two edge-disjoint cycles in a highly connected graph. 
 In \cite{Gro}, the properties of longest cycles sharing $3,4,$ or $5$ vertices are studied and Smith's conjecture is verified for $k \le 6.$  

The above conjecture raises some natural questions for matroids.  For a matroid $M$ having at least one circuit, we define the {\bf circumference}, denoted $c(M)$ to be the size of a largest circuit in $M$.  A $k$-connected matroid is defined in the following way (see \cite{Oxl}).  Let $M$ be a matroid having rank function $r.$
For sets $X$ and $Y$, let $\sqcap_M(X,Y) = r(X) +r(Y) - r(X\cup Y).$
The {\bf connectivity function} for $M$ is the function $\lambda_M: 2^{E(M)} \rightarrow \mathbb{Z}$ where $\lambda_M(A) = r(A) + r(E(M) -A) - r(M);$ that is, $\lambda_M(A) = \sqcap_M(A, E(M) - A).$
For disjoint subsets $X$ and $Y$ we define $\kappa_M(X,Y) = \min_{X \subseteq A \subseteq E(M) - Y} \lambda_M(A)$ which we refer to as the {\bf linkage} between $X$ and $Y$.  We say that $X$ and $Y$ are $\mathbf{k}${\bf-linked} if $\kappa_M(X,Y) \ge k.$  For a subset $A \subset E(M),$ we say that $A$ (or the partition $(A, E(M) - A)$) is $\mathbf{k}${\bf -separating} if 
$\lambda_M(A) < k$.  The matroid $M$ is said to be $\mathbf{k}${\bf-connected} if it has no $(k-1)$-separating partition $(A, E(M)-A)$ where $\min \{ |A|, |E(M) - A| \} \ge k-1.$  It follows from the definition of a $k$-connected matroid, that if $|E(M)| \ge 2k-2,$ then all circuits must have size at least $k.$  When $|E(M)| < 2k-2,$  
it is possible that $M$ can have small circuits and as such, the linkage between two disjoint circuits can be less than $k$.  

Two sets $X$ and $Y$ in matroid are said to be {\bf skew} if $r(X) + r(Y) = r(X\cup Y).$

One interpretation of Smith's conjecture for a graphic matroid $M$ is that it asserts that if $C_1$ and $C_2$ are two $k$-linked disjoint circuits of size $c(M)$, then $\sqcap_M(C_1,C_2) \ge  k-1.$   Conversely, if $C_1$ and $C_2$ are disjoint $k$-linked circuits for which $\sqcap_M(C_1,C_2) < k-1,$ then $|C_1| + |C_2| < 2c(M).$  Based on this, one might guess that for matroids in general if $C_1$ and $C_2$ are $k$-linked skew circuits in a matroid $M$, then $|C_1| + |C_2|$ should be smaller than $2c(M)$ by an amount depending on $k.$ This has been verified in certain special cases.
%
In \cite{Mcg}, it was shown that if $M$ is a $k$-connected regular matroid with circumference at least $k$ and $C_1$ and $C_2$ are skew circuits, then $|C_1| + |C_2| \le 2(c(M)-k+1).$
In \cite{McmReiWei}, the authors showed that if $C_1$ and $C_2$ are skew circuits in a connected matroid, then $|C_1| + |C_2| \le 2c(M)-2.$  The case of large circuits in cographic matroids is studied in \cite{She} where the intersection of largest bonds in $k$-connected graphs is studied.   Generally, finding similar bounds for matroids of higher connectivity is hard.  We conjecture that a bound of the form $2c(M)-k$ exists when the linkage between two skew circuits is sufficiently large.  Specifically, we conjecture:

\begin{conjecture}
Let $C_1$ and $C_2$ be skew circuits  in a matroid $M$ having circumference $c.$ Then for all nonnegative integers $k$ there exists an integer $\alpha(k)$, depending only on $k$,  such that if $C_1$ and $C_2$ are $\alpha(k)$-linked, then 
$|C_1| + |C_2| \le 2c -k.$ \label{con1}
\end{conjecture}

The main result of this paper is that we verify this conjecture for binary matroids.

\begin{theorem}
Let $C_1$ and $C_2$ be skew circuits in a binary matroid $M.$ Then for all nonnegative integers $k$ there exists an integer $\alpha(k)$, depending only on $k$,  such that if $C_1$ and $C_2$ are $\alpha(k)$-linked,  then 
$|C_1| + |C_2| \le 2c(M) -k.$ \label{the-main}
\end{theorem}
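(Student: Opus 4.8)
The plan is to argue by contradiction: suppose $|C_1|+|C_2|\ge 2c(M)-k+1$, and produce a circuit of $M$ of length greater than $c(M)$. Two harmless reductions come first. Since $c(M)\ge\max\{|C_1|,|C_2|\}$, if $\big||C_1|-|C_2|\big|\ge k$ then already $c(M)\ge\max\{|C_1|,|C_2|\}\ge\tfrac12(|C_1|+|C_2|+k)$, a contradiction; hence we may assume $|C_1|$ and $|C_2|$ differ by less than $k$, and then (with the standing hypothesis) both lie within $k$ of $c(M)$. Next, by Tutte's Linking Theorem there is a partition $(I,J)$ of $E(M)\setminus(C_1\cup C_2)$ for which the minor $N:=M/I\setminus J$, with ground set $C_1\cup C_2$, satisfies $\sqcap_N(C_1,C_2)=\kappa_M(C_1,C_2)\ge\alpha(k)$. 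Since neither deletion nor contraction raises the rank of $C_1$ or of $C_2$, $\alpha(k)\le\sqcap_N(C_1,C_2)\le\min\{r(C_1),r(C_2)\}=\min\{|C_1|,|C_2|\}-1$; thus $c(M)\ge\min\{|C_1|,|C_2|\}\ge\alpha(k)+1$, so a large choice of $\alpha(k)$ forces $c(M)$ to be large compared with $k$ — this is what will make the error terms below negligible.

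Since $C_1$ and $C_2$ are skew, $C_1\cup C_2$ has corank $2$ in $M$, so $N$ is a binary matroid whose corank is $\sqcap_N(C_1,C_2)+2$; after a routine adjustment of the linking minor we may also assume $C_1$ and $C_2$ are circuits of $N$, so the binary cycle space of $N$ is spanned by $C_1$, $C_2$ and $\kappa_M(C_1,C_2)$ further circuits. The role of $N$ is to manufacture, inside $M$, many ``connecting circuits'': a circuit of $N$ meeting both $C_1$ and $C_2$ lifts (by adjoining a minimal subset of $I$) to a circuit of $M$ meeting both $C_1$ and $C_2$, and from the $\kappa_M(C_1,C_2)$ crossing directions of the cycle space of $N$ one obtains a large such family, mutually independent in the cycle space.

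The engine that converts a good connecting circuit into a long circuit is the following fact, where binarity is essential. Call a circuit $D$ of $M$ \emph{clean} if it meets both $C_1$ and $C_2$ and $\sqcap_M\!\big(D\setminus(C_1\cup C_2),\,C_1\cup C_2\big)=1$. For a clean $D$ the set $C_1\cup C_2\cup D$ has corank $3$, so the cycle space of $M|(C_1\cup C_2\cup D)$ is the $3$‑dimensional space spanned by $C_1,C_2,D$, and a direct check of its seven nonzero vectors shows that the only one contained in $(C_1\cup C_2)\triangle D$ is that set itself — it is not $C_1$, $C_2$ or $C_1\cup C_2$ because $D$ meets $C_1\cup C_2$, and it is not $C_1\triangle D$ or $C_2\triangle D$ because $D$ meets $C_2$ and $C_1$ respectively. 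Hence $(C_1\cup C_2)\triangle D$ is a single circuit $D^{\ast}$ of $M$, of length $|C_1|+|C_2|+|D\setminus(C_1\cup C_2)|-|D\cap(C_1\cup C_2)|$. Using $|C_1|+|C_2|\ge 2c(M)-k+1$, $|D\setminus(C_1\cup C_2)|\ge 1$ and $|D|\le c(M)$, one gets $|D^{\ast}|>c(M)$ as soon as $|D\cap(C_1\cup C_2)|\le c(M)-k$. So it suffices to produce a clean circuit $D$ meeting $C_1\cup C_2$ in at most $c(M)-k$ elements; then $D^{\ast}$ contradicts the definition of $c(M)$.

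What remains — and where essentially all the difficulty lies — is to extract from the above family a clean circuit with small trace on $C_1\cup C_2$. In the graphic case this is the classical manoeuvre: take $\kappa_M(C_1,C_2)$ internally disjoint $C_1$–$C_2$ paths (Menger), apply Erd\H{o}s--Szekeres to pass to a subfamily that is cyclically monotone along both cycles, and reroute through a consecutive pair spanning a short arc on each cycle; the two ``connecting'' pieces then automatically have $\sqcap_M(\,\cdot\,,C_1\cup C_2)=1$, the trace on $C_1\cup C_2$ is $O\big((|C_1|+|C_2|)/\sqrt{\kappa_M(C_1,C_2)}\big)=o(c(M))$, and we are done. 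In a binary matroid a circuit carries no cyclic order, so ``arcs'' must be replaced by minimal pieces of $C_1$ and $C_2$ cut out by the crossing circuits, the pigeonhole over arcs must be replaced by a Ramsey‑type argument applied to the intersection patterns of the crossing circuits with $C_1$ and with $C_2$, and one must separately argue that a suitable minimality (e.g.\ taking $I$ minimal, or the crossing circuits as short as possible) keeps the relevant lifts clean, preventing spurious extra dependencies against $C_1\cup C_2$. This combinatorial core is the main obstacle, and it is what forces $\alpha(k)$ to be a rapidly growing (Ramsey‑type) function of $k$ rather than a linear one. Once a clean $D$ with small trace is in hand, the merging step of the previous paragraph finishes the proof.
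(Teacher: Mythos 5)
Your reductions and your ``clean circuit'' mechanism are both correct: if a circuit $D$ meets both $C_1$ and $C_2$ with $\sqcap_M(D\setminus(C_1\cup C_2),\,C_1\cup C_2)=1$, then $(C_1\cup C_2)\triangle D$ is indeed a single circuit of $M$, and your arithmetic shows it has length exceeding $c$ once $|D\cap(C_1\cup C_2)|\le c-k$. But you never produce such a $D$; you explicitly defer the combinatorial core (``the main obstacle''), which is exactly where the paper spends all of its effort, via Ramsey's theorem together with the Balogh--Bollob\'{a}s theorem on unavoidable $0,1$-submatrices. As written, the proposal proves the easy reduction and leaves the theorem itself unproved.

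There is also a structural concern with the target you set. A single clean circuit of small trace is a strictly stronger output than what the paper's argument delivers. The paper shows that at least one of two weaker scenarios holds, and in each case it bounds a \emph{pair} of circuits by $2c$ rather than exhibiting one circuit longer than $c$: either (S1) there is a circuit $C$ with $C+C_1$ and $C+C_2$ both circuits and $|C\setminus(C_1\cup C_2)|\ge k/2$, or (S2) there are disjoint circuits $C_1',C_2'$ with $C_1+C_2+C_i'$ circuits and $|(C_1\cup C_2)\setminus(C_1'\cup C_2')|\ge k$. In S1 the witness is $C=D_J$ with $|J|=k$: its part outside $C_1\cup C_2$ has rank $k$, not $1$, so it is not clean, and the paper never asserts that $C_1+C_2+C$ is a circuit. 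In S2 each $C_i'$ individually may meet $C_1\cup C_2$ in far more than $c-k$ elements; only their \emph{union} is guaranteed to miss $k$ elements. Neither scenario supplies the single clean circuit with trace at most $c-k$ that your reduction requires, and you give no argument that one always exists. To make your route go through you would need to establish independently that such a clean circuit exists whenever the linkage is large enough (a claim not in the paper and not obviously true), or else prove the disjunction S1 or S2, which is where the Ramsey-type machinery you gesture at actually lives.
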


We note that it suffices to prove the theorem for any infinite sequence of positive integers, in particular when $k$ is an even integer.  The existence of the constant $\alpha(k)$ follows implicitly from the proof of the above theorem and we give no explicit value other than to say that it is large.  

\subsection{Notation}

For the most part, we shall follow the definitions and notation for matroids given in \cite{Oxl} with the exception of some notation for sets.
For finite sets $X$ and $Y$ of a universal set $U$ we write $X \parallel Y$ if $X \cap Y = \emptyset$ and we write $X \subandsup Y$ (resp. $X \subandsupeq Y$) if either $X \subset Y$ or $Y \subset X$ (resp. $X \subseteq Y$ or $Y \subseteq X$). 
We write $X \bowtie_U Y$ if $U -X \parallel U - Y,$ where we drop the index $U$ when it is implicit.  We write $X \parallelbowtie Y$ if either $X\parallel Y$ or $X \bowtie Y.$

For a set $X$ and a nonnegative integer $k$, we let ${\binom Xk}$ denote the set of all $k$-subsets of $X.$  For finite nonempty subsets of integers $U$ and $V$, we write $U < V$ if $\max \{ u\in U \} < \min \{ v \in V \}.$

For a set $X$ and elements $x,y$ we will often write $X - x$ in place of $X - \{ x \}$ and $X - x + y$ in place of $(X - \{ x \} ) \cup \{ y \}$.  More generally, for elements $x_1, \dots ,x_k$ and $y_1, \dots ,y_\ell$, $X - x_1 - \cdots - x_k + y_1 + \cdots + y_\ell$ will denote the set $(X - \{ x_1, \dots ,x_k \} ) \cup \{ y_1, \dots ,y_\ell \}.$

For a positive integer $k$, we let $[k] = \{ 1, \dots ,k \}$ and we let $[k]^e$ (resp. $[k]^o$) denote the set of even (resp. odd) integers in $[k].$  For integers $k < \ell$, we let 
$[k,\ell] = \{ k, k+1, \dots , \ell \}$ and we let $[k,\ell]^e$ (resp. $[k,\ell]^o$) denote the set of even (resp. odd) integers in $[k,\ell].$

\section{An overview of the proof}
If for some $i\in \{ 1,2 \}$,  $|C_i| \le c(M)-k,$ then the conclusion of Theorem \ref{the-main} is immediate.  As such, one can assume that for $i=1,2,$ $|C_i| > c(M) -k.$
The proof of Theorem \ref{the-main} boils down to finding a certain pair of circuits of which there are two possible scenarios. 
 
We define a {\bf cycle} of $M$ to be a disjoint union of circuits, including the empty set.
We let $\C_M$ denote the set of all cycles in $M$ and for a subset $X \subseteq E(M)$, we let $\C_M(X)$ be the set of cycles contained in $X.$  For cycles $C_1$ and $C_2$, we define addition $C_1 + C_2 := C_1 \triangle C_2$ (where $\triangle$ denotes the symmetric difference operation).  Since the matroids considered in this paper are binary, $\C_M$ is seen to be closed under this addition.

Suppose $M$, $C_1,$ $C_2$, and $k$ are as in the statement of Theorem \ref{the-main}.  To prove this theorem, it will suffice to show that at least one of two things must occur:

\begin{itemize}
\item[{\bf S1})] There is a cycle $C$ where $C+C_1$ and $C+ C_2$ are circuits and\\ $|C - (C_1 \cup C_2)| \ge \frac k2$.
\item[{\bf S2})] There are disjoint cycles $C_1'$ and $C_2'$ where $C_1 + C_2 + C_i',\ i = 1,2$ are circuits and $|(C_1 \cup C_2) + (C_1' \cup C_2')| \ge k.$ 
\end{itemize}

If S1) occurs, then we have $$2c(M) \ge |C+ C_1| + |C + C_2| = |C_1| + |C_2| + 2|C - (C_1 \cup C_2)|| \ge |C_1| + |C_2| + k.$$  If S2) occurs, then we have $$2c(M) \ge \sum_{i=1}^2 |C_1 + C_2 + C_i'| = |C_1| + |C_2| + |(C_1 \cup C_2) + (C_1' \cup C_2')| \ge |C_1| + |C_2| + k.$$
%
To find such cycles, we first use a version of Tutte's Linking Lemma to reduce the problem to a minor $N$ where $C_i,\ i = 1,2$ are skew circuits in $N$ and $C_1 \cup C_2$ spans $N.$ Furthermore, the matroid $N$ will also have the property that $\kappa_N(C_1,C_2) = |E(N) - (C_1 \cup C_2)|.$   It will suffice to prove the theorem with $N$ in place of $M.$  Assuming $X = E(N) - (C_1 \cup C_2) = \{ x_1, \dots ,x_t \}$, for each $i\in[t]$ we pick a circuit $D_i$ in $(C_1 \cup C_2) +x_i.$  The cycles we are looking for to satisfy S1) or S2) will be of the form $D_I$ where $I \subset [t]$ and $D_I = \sum_{i\in I} D_i.$  

To begin with, suppose there exist constants $\alpha_i(k),\ i = 1,2$ depending only on $k$ with the property that if $I \subset [t]$ has size at least $\alpha_i,$ then there is a subset $J \in {\binom Ik}$ for which $D_J + C_i$ is a circuit.  Then one can use Ramsey's theorem for hypergraphs to show that there is a constant $\alpha(k)$ depending only on $k$ such that if $I \subset [t]$ has size at least $\alpha$, then for some $J \in {\binom Ik}$ both $D_J + C_1$ and $D_J + C_2$ are circuits.  In this case, one can find a circuit $C$ as described in S1). 
Thus we assume that no such constants $\alpha_i,\ i = 1,2$ exist and in particular we may assume that for an integer $p$ chosen as large as we want, for all 
$I \in {\binom {[p]}k},$ $D_I + C_1$ is not a circuit.   For the remainder of the proof, we shall show that there is a pair of cycles $C_i',\ i = 1,2$ as described in S2).   
To do this, we use a theorem of Balogh and Bollob\'{a}s \cite{BalBol} regarding unavoidable configurations in $0,1$-matrices.  This theorem asserts that for any integer $\ell \ge 1,$ if a simple $0,1$-matrix has a large enough number of rows, then one can find an induced submatrix which, after permuting its rows and columns,  is either $I_\ell$, $I_\ell^c$, or $T_\ell$; that is, the $\ell \times \ell$ identity matrix, its complement, or the $\ell \times \ell$ lower-triangular matrix with $1$'s below or on the diagonal. We apply this theorem to the matrices $A_i,\ i = 1,2$ where $A_i$ is the $0,1$-matrix whose rows correspond to the binary indicator vectors for the subsets $D_j \cap C_i, \ j \in [p].$  
This will enable us to show that provided $p$ is large enough, one can construct pairwise disjoint sets $U_i \subset [p],\ i \in [4]$ such that for $i=1,2,3,$  $D_{U_i} \cap C_1 \subset D_{U_{i+1}} \cap C_1$ and the sets $D_{U_i}\cap C_2,\ i \in [4]$ are pairwise disjoint.  Furthermore, for all $1 \le i < j \le 4,\ |U_i \cup U_j| \ge k.$  It can now be shown that $D_{U_1 \cup U_2}$ and $D_{U_{3} \cup U_4}$ are cycles satisfying S2).


\section{Exploiting linkage}

We shall make use of the following result of Tutte \cite{Tut} known as {\bf Tutte's linking lemma},  which, in the case of graphic matroids, implies Menger's theorem. We shall use a stronger version of this theorem found in \cite{GeeGerWhi}.

\begin{lemma}
Let $M$ be a matroid and let $X,Y$ be disjoint subsets of elements. Then there is a minor $N$ of $M$ such that, $E(N) = X \cup Y$, $\sqcap_N(X,Y) = \kappa_N(X,Y) = \kappa_M(X,Y)$, and $N \big| X = M \big| X,\ N \big| Y = M \big| Y$.
\label{lem1}
\end{lemma}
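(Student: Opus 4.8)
This is Tutte's Linking Theorem in the refined form of \cite{GeeGerWhi}, and here is the proof I would give. First note that if $E(N) = X \cup Y$ then the only set $A$ with $X \subseteq A \subseteq E(N) \setminus Y$ is $A = X$, so $\kappa_N(X,Y) = \lambda_N(X)$; hence the task is to produce a minor $N$ with ground set $X \cup Y$ satisfying $N|X = M|X$, $N|Y = M|Y$ and $\lambda_N(X) = \kappa_M(X,Y)$. I would induct on $|E(M)| - |X| - |Y|$. When this quantity is $0$ we have $E(M) = X \cup Y$ and $N = M$ works. Otherwise pick $e \in E(M) \setminus (X \cup Y)$ and set $\kappa = \kappa_M(X,Y)$; the goal at this step is to delete or contract $e$ so as to preserve $\kappa$ together with the restrictions $M|X$ and $M|Y$, and then apply induction. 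Deleting $e$ preserves $M|X$ and $M|Y$ unconditionally, whereas contracting $e$ preserves them provided $e \notin \mathrm{cl}_M(X) \cup \mathrm{cl}_M(Y)$ (one checks $r_{M/e}(S) = r_M(S)$ for every $S \subseteq X$ once $e \notin \mathrm{cl}_M(X)$, and likewise for $Y$). So two cases arise, according to whether or not $e \in \mathrm{cl}_M(X) \cup \mathrm{cl}_M(Y)$.

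Suppose $e \in \mathrm{cl}_M(X) \cup \mathrm{cl}_M(Y)$. Then I would delete $e$ and check $\kappa_{M \setminus e}(X,Y) = \kappa$. The inequality ``$\le$'' is minor-monotonicity of $\kappa$: a set attaining $\kappa_M(X,Y)$ yields, after discarding $e$ if it lies in it, a set $A$ with $X \subseteq A \subseteq E(M \setminus e) \setminus Y$ and $\lambda_{M \setminus e}(A) \le \kappa$ --- pure rank bookkeeping. For ``$\ge$'', take any $Z$ with $X \subseteq Z \subseteq E(M \setminus e) \setminus Y$; then either $Z \cup \{e\}$ (if $e \in \mathrm{cl}_M(X)$) or $Z$ itself (if $e \in \mathrm{cl}_M(Y)$) is a set sandwiched between $X$ and $E(M) \setminus Y$ with the same $\lambda$-value, using that $e$ lies in the closure of the pertinent side and in $\mathrm{cl}_M(E(M) \setminus \{e\})$; hence $\lambda_{M \setminus e}(Z) \ge \kappa$. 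Since $(M \setminus e)|X = M|X$ and $(M \setminus e)|Y = M|Y$, the inductive hypothesis applied to $M \setminus e$ finishes this case.

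Now suppose $e \notin \mathrm{cl}_M(X) \cup \mathrm{cl}_M(Y)$, so that both $M \setminus e$ and $M/e$ preserve the restrictions to $X$ and $Y$; it then suffices to show that at least one of $\kappa_{M \setminus e}(X,Y)$ and $\kappa_{M/e}(X,Y)$ equals $\kappa$. This is the classical local step of Tutte's theorem, and it is the point where I expect the real work to lie. Assume for contradiction that both are $< \kappa$ (both are $\le \kappa$ by monotonicity). Examining how $\lambda$ can drop under a single deletion, respectively contraction, of $e$ --- again routine rank arithmetic --- one extracts a set $A$ with $X \subseteq A \subseteq E(M) \setminus Y$, $\lambda_M(A) = \kappa$, $e \notin A$ and $e \notin \mathrm{cl}_M(E(M) \setminus (A \cup \{e\}))$, and a set $B$ with $X \subseteq B \subseteq E(M) \setminus Y$, $\lambda_M(B) = \kappa$, $e \notin B$ and $e \in \mathrm{cl}_M(B)$. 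Since $\lambda_M$ is submodular and every set sandwiched between $X$ and $E(M) \setminus Y$ has $\lambda_M \ge \kappa$, the union $W := A \cup B$ again satisfies $\lambda_M(W) = \kappa$; and $W$ inherits $e \in \mathrm{cl}_M(W)$ from $B$ and, by monotonicity of closure, $e \notin \mathrm{cl}_M(E(M) \setminus (W \cup \{e\}))$ from $A$. But then $W \cup \{e\}$ is sandwiched between $X$ and $E(M) \setminus Y$ with $\lambda_M(W \cup \{e\}) = \lambda_M(W) - 1 = \kappa - 1 < \kappa$, contradicting $\kappa = \kappa_M(X,Y)$. Hence one of $M \setminus e$, $M/e$ preserves $\kappa$, and applying the inductive hypothesis to it completes the proof. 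The only genuinely delicate point is this last contradiction --- fusing the two ``bad'' separations by union and using submodularity of $\lambda_M$ together with monotonicity of closure to manufacture a set whose $\lambda_M$-value is $\kappa - 1$; everything else reduces to routine manipulation of the rank function, and the ``strong'' part of the statement (that $N$ restricts correctly to $X$ and to $Y$) is exactly what the case split on $\mathrm{cl}_M(X) \cup \mathrm{cl}_M(Y)$ is designed to deliver.
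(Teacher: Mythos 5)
The paper does not prove this lemma: it imports it as a known result from Tutte \cite{Tut} and the refined version from \cite{GeeGerWhi}, so there is no in-paper argument to compare yours against. Your proof is correct and is the standard inductive argument for the linking theorem, refined to carry the restrictions $M|X$ and $M|Y$ along. The induction on $|E(M)\setminus(X\cup Y)|$, the case split on whether $e\in\mathrm{cl}_M(X)\cup\mathrm{cl}_M(Y)$, and the minor-monotonicity of $\kappa$ are all handled correctly. In Case 1 the identity $\lambda_{M\setminus e}(Z)=\lambda_M(Z\cup\{e\})$ when $e\in\mathrm{cl}_M(X)$ (resp.\ $\lambda_{M\setminus e}(Z)=\lambda_M(Z)$ when $e\in\mathrm{cl}_M(Y)$) holds because $e$ lies in the closure of the relevant side and hence also $r(M\setminus e)=r(M)$, so $\kappa$ is preserved under deletion. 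In Case 2 your rank bookkeeping does extract sets $A$ (with $\lambda_M(A)=\kappa$, $e\notin A$, $e\notin\mathrm{cl}_M(E\setminus(A\cup\{e\}))$) and $B$ (with $\lambda_M(B)=\kappa$, $e\notin B$, $e\in\mathrm{cl}_M(B)$) from the two hypothetical drops, and the uncrossing step $\lambda_M(A)+\lambda_M(B)\ge\lambda_M(A\cup B)+\lambda_M(A\cap B)$ forces $\lambda_M(A\cup B)=\kappa$, after which $W\cup\{e\}$ with $W=A\cup B$ indeed has $\lambda_M(W\cup\{e\})=\kappa-1$, the desired contradiction; this is exactly the submodularity core of Tutte's proof. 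One point worth making explicit: in Case 2, $e$ cannot be a loop (a loop lies in every closure, in particular in $\mathrm{cl}_M(X)$), which is what licenses taking $r_M(\{e\})=1$ in the contraction computation.
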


For our purposes, we shall need a stronger version of the above lemma which can be found in \cite{GeeNelWal}:

\begin{lemma}
Let $M$ be a matroid and let $m\ge 1$ be an integer.  Suppose that for a subset $X \subset E(M)$ and subsets $Y_i \subset E(M),\ i \in [m],$ we have $Y_1 \subseteq Y_2 \subseteq \cdots \subseteq Y_m \subseteq E(M) - X.$  Then $M$ has a minor $N$ with ground set $X \cup Y_m$ such that 
 for all $i\in [m]$, $\kappa_N(X,Y_i) = \kappa_M(X, Y_i)$, and $N \big| X = M \big| X$ and $N\big| Y_1 = M \big| Y_1.$\label{lem2}
\end{lemma}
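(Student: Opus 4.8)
The plan is to prove the lemma by induction on $|E(M)\setminus(X\cup Y_m)|$. If $E(M)=X\cup Y_m$, take $N=M$. For the inductive step, fix $e\in E(M)\setminus(X\cup Y_m)$; it suffices to produce $M'\in\{M\setminus e,\ M/e\}$ with $\kappa_{M'}(X,Y_i)=\kappa_M(X,Y_i)$ for all $i\in[m]$, $M'\big|X=M\big|X$, and $M'\big|Y_1=M\big|Y_1$, because then the inductive hypothesis applied to $M'$ (whose ground set is $E(M)\setminus\{e\}\supseteq X\cup Y_m$, with one fewer element outside $X\cup Y_m$, and which still has $Y_1\subseteq\cdots\subseteq Y_m\subseteq E(M')\setminus X$) yields a minor $N$ of $M'$, hence of $M$, with $E(N)=X\cup Y_m$ and all the required properties. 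If $e$ is a loop or a coloop of $M$, a direct rank computation gives $\kappa_{M\setminus e}(X,Y_i)=\kappa_M(X,Y_i)$ for all $i$, and $M\setminus e$ preserves both restrictions, so $M'=M\setminus e$ works; hence assume $e$ is neither from now on.

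Write $E:=E(M)$, and for disjoint $X,Y$ with $e\notin X\cup Y$ call a set $A$ with $X\subseteq A\subseteq E\setminus Y$ and $\lambda_M(A)=\kappa_M(X,Y)$ an \emph{optimal $(X,Y)$-separator}. Using submodularity of $\lambda_M$ and the identity $\lambda_{M\setminus e}(A)=\lambda_M(A)-\bigl(r(E\setminus A)-r((E\setminus A)\setminus e)\bigr)$ for $e\notin A$ (and its dual), one checks: $\kappa_{M\setminus e}(X,Y)<\kappa_M(X,Y)$ exactly when some optimal $(X,Y)$-separator $A$ has $e\notin A$ and $e\notin\mathrm{cl}_M((E\setminus A)\setminus e)$ — call such an $A$ a \emph{D-obstructor}; and $\kappa_{M/e}(X,Y)<\kappa_M(X,Y)$ exactly when some optimal $(X,Y)$-separator $A$ has $e\notin A$ and $e\in\mathrm{cl}_M(A)$ — call such an $A$ a \emph{C-obstructor}. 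Moreover the two cannot both happen: this is the element-wise form of Lemma~\ref{lem1}, i.e. $\kappa_M(X,Y)=\max\{\kappa_{M\setminus e}(X,Y),\kappa_{M/e}(X,Y)\}$, and it has a short proof — if $A_1$ is a D-obstructor and $A_2$ a C-obstructor, then by submodularity $A_1\cup A_2$ is again an optimal $(X,Y)$-separator, and it is simultaneously a D-obstructor (since $(E\setminus(A_1\cup A_2))\setminus e\subseteq(E\setminus A_1)\setminus e$) and a C-obstructor (since $A_2\subseteq A_1\cup A_2$); but an optimal $(X,Y)$-separator that is both would satisfy $\lambda_M(A\cup e)=\kappa_M(X,Y)-1$, contradicting that $A\cup e$ is an $(X,Y)$-separator.

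The crux is: \emph{for the fixed $e$, either $\kappa_{M\setminus e}(X,Y_i)=\kappa_M(X,Y_i)$ for all $i$, or $\kappa_{M/e}(X,Y_i)=\kappa_M(X,Y_i)$ for all $i$.} Suppose not, pick $i$ with $\kappa_{M\setminus e}(X,Y_i)<\kappa_M(X,Y_i)$; by the element-wise lemma $\kappa_{M/e}(X,Y_i)=\kappa_M(X,Y_i)$, so choosing $j$ with $\kappa_{M/e}(X,Y_j)<\kappa_M(X,Y_j)$ forces $j\neq i$. Let $A$ be a D-obstructor for $(X,Y_i)$ and $C$ a C-obstructor for $(X,Y_j)$, so $e\notin A\cup C$. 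By nestedness, either $Y_i\subseteq Y_j$ or $Y_j\subseteq Y_i$. If $Y_i\subseteq Y_j$, then $A\cup C\subseteq E\setminus Y_i$ and $A\cap C\subseteq C\subseteq E\setminus Y_j$, so submodularity makes $A\cup C$ an optimal $(X,Y_i)$-separator (and $A\cap C$ an optimal $(X,Y_j)$-separator); then $A\cup C$ is a C-obstructor for $(X,Y_i)$ since $e\in\mathrm{cl}_M(C)\subseteq\mathrm{cl}_M(A\cup C)$, while $A$ is a D-obstructor for $(X,Y_i)$, contradicting the element-wise lemma for $(X,Y_i)$. If $Y_j\subseteq Y_i$, then symmetrically $A\cup C\subseteq E\setminus Y_j$ is an optimal $(X,Y_j)$-separator that is a D-obstructor for $(X,Y_j)$ since $(E\setminus(A\cup C))\setminus e\subseteq(E\setminus A)\setminus e$, while $C$ is a C-obstructor for $(X,Y_j)$, again a contradiction.

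It remains to choose $M'$ preserving $M\big|X$ and $M\big|Y_1$ as well. Deletion preserves both; and since $e$ is not a loop, $(M/e)\big|X=M\big|X$ iff $e\notin\mathrm{cl}_M(X)$, and $(M/e)\big|Y_1=M\big|Y_1$ iff $e\notin\mathrm{cl}_M(Y_1)$. So if $e\notin\mathrm{cl}_M(X)\cup\mathrm{cl}_M(Y_1)$, whichever of $M\setminus e,\ M/e$ the crux supplies does the job. If instead $e\in\mathrm{cl}_M(X)$ or $e\in\mathrm{cl}_M(Y_1)$, take $M'=M\setminus e$: no $(X,Y_i)$ has a D-obstructor, hence $\kappa_{M\setminus e}(X,Y_i)=\kappa_M(X,Y_i)$ for all $i$. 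Indeed, were $A$ a D-obstructor for $(X,Y_i)$, then $e\notin A$, and if $e\in\mathrm{cl}_M(X)$ we would get $e\in\mathrm{cl}_M(X)\subseteq\mathrm{cl}_M(A)$, making $A$ also a C-obstructor and contradicting the element-wise lemma, while if $e\in\mathrm{cl}_M(Y_1)$ then, as $Y_1\subseteq Y_i\subseteq E\setminus A$ and $e\notin Y_1$, we would get $e\in\mathrm{cl}_M(Y_1)\subseteq\mathrm{cl}_M((E\setminus A)\setminus e)$, contradicting that $A$ is a D-obstructor. This completes the induction. I expect the bookkeeping in the crux — tracking which of $A\cup C$, $A\cap C$ is optimal for which pair and which obstruction property it inherits across the two nesting cases — to be the main point requiring care; everything else is submodularity of $\lambda_M$ together with routine rank arithmetic.
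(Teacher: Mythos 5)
The paper does not prove this lemma at all: it is quoted directly from the reference \cite{GeeNelWal} (Geelen, Nelson, Walsh) with no proof given, so there is no in-paper argument to compare against. You have therefore supplied a proof where the authors supply only a citation. Your argument is correct. The structure is a clean induction on $|E(M)\setminus(X\cup Y_m)|$, with the loop/coloop case dispatched by deletion. The two working parts both check out: (i) the element-wise form of Tutte's linking lemma, $\kappa_M(X,Y)=\max\{\kappa_{M\setminus e}(X,Y),\kappa_{M/e}(X,Y)\}$, proved via the characterisation of when each of $\kappa_{M\setminus e}$, $\kappa_{M/e}$ drops in terms of the existence of a D-obstructor/C-obstructor, together with the observation that a common obstructor $A$ would give $\lambda_M(A\cup e)=\kappa_M(X,Y)-1$, a contradiction; and (ii) the ``crux'' — that a single choice of deletion or contraction of $e$ preserves $\kappa_M(X,Y_i)$ for every $i$ simultaneously — which is where the nestedness $Y_1\subseteq\cdots\subseteq Y_m$ is genuinely used. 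In case (ii) your submodularity bookkeeping is right: when $Y_i\subseteq Y_j$, both $A\cup C$ and $A\cap C$ are feasible for the appropriate pairs, optimality passes to $A\cup C$ for $(X,Y_i)$, and the C-obstruction is inherited by $A\cup C$ from $C$; the case $Y_j\subseteq Y_i$ is symmetric with the D-obstruction inherited from $A$. Finally, the handling of the restriction constraints $N|X=M|X$ and $N|Y_1=M|Y_1$ is correct: when $e\in\mathrm{cl}_M(X)$ no D-obstructor can exist because any would also be a C-obstructor (as $X\subseteq A$), and when $e\in\mathrm{cl}_M(Y_1)$ no D-obstructor can exist because $Y_1\subseteq(E\setminus A)\setminus e$ forces $e\in\mathrm{cl}_M((E\setminus A)\setminus e)$; in either case deletion works outright, and otherwise contraction is also available. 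This is a valid, self-contained proof of a lemma the paper leaves unproved.
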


The first step in the proof of Theorem \ref{the-main} is to reduce the proof to a matroid $N$ where $C_1 \cup C_2$ spans $N$ and $\kappa_N(C_1,C_2) = \kappa_M(C_1,C_2).$
This can be done using the following lemma.

\begin{lemma}
Let $X$ and $Y$ be disjoint sets of a matroid $M.$  Then there exists a minor $N$ such that $X \cup Y \subseteq E(N),$ $X \cup Y$ spans $N$, $\sqcap_N(X,Y) = \sqcap_M(X,Y)$, $\kappa_N(X,Y) = \kappa_M(X,Y)$
and $N\big| X = M\big| X,$ $N\big| Y = M \big| Y.$
\label{lem4}
\end{lemma}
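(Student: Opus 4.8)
The plan is to derive Lemma~\ref{lem4} from Tutte's Linking Lemma (Lemma~\ref{lem1}) followed by a minimal ``un‑contraction''. First I would apply Lemma~\ref{lem1} to the disjoint sets $X,Y$ to obtain a minor $N_0=M/J\setminus K$ of $M$ with $E(N_0)=X\cup Y$, $\kappa_{N_0}(X,Y)=\kappa_M(X,Y)$, $N_0\big|X=M\big|X$ and $N_0\big|Y=M\big|Y$, where $(J,K)$ is a partition of $E(M)\setminus(X\cup Y)$. Then I would choose $J'\subseteq J$ to be a maximal subset that is skew to $X\cup Y$, i.e.\ with $\sqcap_M(X\cup Y,J')=0$, and set $N:=M/J'\setminus K$; equivalently $N$ is obtained from $N_0$ by reinstating the elements of $J\setminus J'$ as ground‑set elements. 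One checks $N/(J\setminus J')=N_0$, so $N_0$ is a minor of $N$, and $N$ is itself a minor of $M$ containing $X\cup Y$.

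The verification then splits into an easy part and one load‑bearing computation. From $N_0\big|X=M\big|X$ we get $\sqcap_M(X',J)=0$ for all $X'\subseteq X$, hence $\sqcap_M(X',J')=0$ by monotonicity of $\sqcap$, so $N\big|X=(M/J')\big|X=M\big|X$, and likewise $N\big|Y=M\big|Y$; combined with the fact that passing to a minor cannot increase the linkage between two fixed sets (a standard consequence of Lemma~\ref{lem1} together with matroid duality, using $\lambda_M=\lambda_{M^*}$), this gives $\kappa_M(X,Y)=\kappa_{N_0}(X,Y)\le\kappa_N(X,Y)\le\kappa_M(X,Y)$, so $\kappa_N(X,Y)=\kappa_M(X,Y)$. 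The load‑bearing step is the rank count: maximality of $J'$ forces $J\setminus J'\subseteq\mathrm{cl}_M\big((X\cup Y)\cup J'\big)$, whence $r_M\big((X\cup Y)\cup J'\big)=r_M\big((X\cup Y)\cup J\big)$, and together with $\sqcap_M(X\cup Y,J')=0$ this yields $r_M(J')=r_M\big((X\cup Y)\cup J\big)-r_M(X\cup Y)$. From this one reads off $r(N)=r_{M/J'}\big((X\cup Y)\cup(J\setminus J')\big)=r_M\big((X\cup Y)\cup J\big)-r_M(J')=r_M(X\cup Y)$ and similarly $r_N(X\cup Y)=r_M(X\cup Y)$, so $X\cup Y$ spans $N$; and then $\sqcap_N(X,Y)=r_N(X)+r_N(Y)-r_N(X\cup Y)=r_M(X)+r_M(Y)-r_M(X\cup Y)=\sqcap_M(X,Y)$.

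The real difficulty is not in any single verification but in reconciling the three ``size'' requirements on $N$: $X\cup Y$ must span $N$ and $\sqcap_N(X,Y)$ must equal $\sqcap_M(X,Y)$ — which together pin $r(N)$ to $r_M(X\cup Y)$ — yet $\kappa_N(X,Y)$ must still equal $\kappa_M(X,Y)$, and in general $\kappa_M(X,Y)>\sqcap_M(X,Y)$ is possible, so the ``extra'' linkage has to be carried by ground‑set elements outside $X\cup Y$. Neither restricting $M$ to $\mathrm{cl}_M(X\cup Y)$ nor contracting an arbitrary rank‑restoring independent set skew to $\mathrm{cl}_M(X\cup Y)$ accomplishes this — each can strictly lower $\kappa_M(X,Y)$, as small theta‑graph examples show. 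What rescues the argument is that the reinstated set $J\setminus J'$ is dictated by the \emph{particular} minor produced by Tutte's Linking Lemma, and is therefore automatically aligned with a minimum separation witnessing $\kappa_M(X,Y)$; this is exactly the point at which Lemma~\ref{lem1} (equivalently, Lemma~\ref{lem2}) does the real work, rather than a naive rank argument. I expect the only remaining point needing care is a clean statement that $\kappa$ is minor‑monotone, which I would either cite or dispatch in a line by reducing the contraction case to the deletion case via duality.
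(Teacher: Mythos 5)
Your proposal is correct, but it takes a genuinely different route from the paper's. The paper proves Lemma~\ref{lem4} by induction on $|E(M)-X-Y|$: it picks a single element $e\notin\mathrm{cl}_M(X\cup Y)$, invokes Lemma~\ref{lem2} with the nested pair $Y_1=Y$, $Y_2=E(M)-X-e$ to produce a minor $M_1$ on ground set $E(M)-e$ preserving $\kappa(X,Y)$, observes that because $e$ lies outside $\mathrm{cl}_M(X\cup Y)$ the restrictions to $X$ and $Y$ and the value of $\sqcap(X,Y)$ are automatically preserved (whichever of $M/e$, $M\setminus e$ the lemma returns), and iterates. Your construction instead applies Lemma~\ref{lem1} once to get $N_0=M/J\setminus K$ on $X\cup Y$, then ``un-contracts'' a maximal $J'\subseteq J$ skew to $X\cup Y$, taking $N=M/J'\setminus K$; the linkage equality then falls out of the sandwich $\kappa_M(X,Y)=\kappa_{N_0}(X,Y)\le\kappa_N(X,Y)\le\kappa_M(X,Y)$ via minor-monotonicity of $\kappa$ (noting $N_0=N/(J\setminus J')\preceq N\preceq M$), while the spanning and $\sqcap$-preservation conditions follow from the rank computation you give together with the maximality of $J'$. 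I checked your rank bookkeeping and it is sound: maximality does force $J\setminus J'\subseteq\mathrm{cl}_M((X\cup Y)\cup J')$, which pins $r(N)=r_N(X\cup Y)=r_M(X\cup Y)$; and $\sqcap_M(A,J)=0$ for $A\subseteq X$ (resp.\ $A\subseteq Y$), inherited from $N_0\big|X=M\big|X$, passes down to $J'$ by monotonicity of $\sqcap$, giving the restriction equalities. The trade-off: the paper's route needs the stronger nested linking statement (Lemma~\ref{lem2}) to control the ground set one element at a time, whereas your route needs only Lemma~\ref{lem1} but additionally leans on minor-monotonicity of $\kappa$, an elementary standard fact (deletion case by submodularity, contraction by duality) that is not stated in the paper and that you should either cite or prove in a line. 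One stylistic remark: the sentence about $J\setminus J'$ being ``automatically aligned with a minimum separation witnessing $\kappa_M(X,Y)$'' is not really what makes the argument work and is a bit misleading; the load-bearing fact is simply that $N_0\preceq N\preceq M$ with $\kappa_{N_0}(X,Y)=\kappa_M(X,Y)$, which is exactly what Lemma~\ref{lem1} hands you, and the rest is the sandwich.
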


\begin{proof}
By induction on $|E(M) - X-Y|.$  The lemma is clearly true if $X \cup Y$ spans $M$ (since taking $N=M$ will suffice).  Thus we may assume that $X \cup Y$ does not span $M$ and that the lemma is true for all matroids $M'$ where $|E(M') - X - Y| < |E(M) - X - Y|.$   Let $e \in E(M) - \mathrm{cl}_M(X \cup Y)$ and let $Y_1 = Y$ and $Y_2 = E(M) - X - e.$ By Lemma \ref{lem2}, there exists a minor $M_1$ of $M$ with ground set $X \cup Y_2 = E(M) - e$ such that 
$\kappa_{M_1}(X, Y) = \kappa_{M}(X,Y).$  We note either $M_1 = M/e$ or $M_1= M\backslash e.$ 
Since $e\not\in \mathrm{cl}_M(X \cup Y),$ we have $\sqcap_{M_1}(X,Y) = \sqcap_M(X,Y)$, $M_1\big| X = M\big| X,$ $M_1\big| Y = M\big| Y$.  By induction, there exists a minor $N$ of $M_1$ containing $X\cup Y$ such that $X \cup Y$ spans $N$ and $\kappa_N(X,Y) = \kappa_{M_1}(X,Y) = \kappa_{M}(X,Y),$ $\sqcap_N(X,Y) = \sqcap_{M_1}(X,Y) = \sqcap_M(X,Y).$  Moreover, $N \big| X = M_1 \big| X = M \big| X$ and $N \big| Y  = M_1 \big| Y = M \big| Y.$ 
\end{proof}

\subsection{Reducing the problem to a minor $N$}

 The advantage of the previous lemmas is that they allow us to reduce the proof of Theorem \ref{the-main} to a minor $N$ of $M$ for which
$\kappa_N(C_1,C_2) = \kappa_M(C_1,C_2)$ and $\sqcap_N(C_1, C_2) = \sqcap_M(C_1, C_2) = 0.$ 

For the remainder of this paper, we shall let $C_i = \{ e_{i1}, \dots e_{in_i} \},\ i = 1,2$ be two disjoint circuits in a binary matroid $M$ where $\sqcap (C_1, C_2) =0.$  By Lemma \ref{lem4}, there is a minor $M'$ of $M$ containing $C_1 \cup C_2$ where $C_1 \cup C_2$ spans $M'$, $\kappa_{M'}(C_1, C_2) = \kappa_{M}(C_1, C_2),$ $\sqcap_{M'}(C_1, C_2) = \sqcap_M(C_1, C_2) =0,$ and $M'\big| C_1 = M\big| C_1,\ M' \big| C_2 = M \big| C_2.$  
We may assume that for $i =1,2,$ $\mathrm{cl}_{M'}(C_i) = C_i.$  
To see this, it suffices to show for $e\in \mathrm{cl}_{M'}(C_1) - C_1$ that $\kappa_{M'\backslash e}(C_1, C_2) = \kappa_{M'}(C_1, C_2).$ 
Clearly $\kappa_{M'\backslash e}(C_1, C_2) \le \kappa_{M'}(C_1, C_2)$, noting that $r(M'\backslash e) = r(M').$   Let $A \cup B$ be a partition of $E(M'\backslash e)$ where $C_1 \subset A$ and $C_2 \subset B.$
Let $A' = A+e$ and $B' = B.$  Then $A' \cup B'$ is a partition of $E(M)$ where $C_1 \subset A'$ and $C_2 \subset B'$ and $r(A') + r(B') = r(A) + r(B).$  Thus  $\kappa_{M'}(C_1,C_2) \le \kappa_{M'\backslash e}(C_1,C_2)$ and hence equality must hold. 

 By Lemma \ref{lem1}, there exists a minor $N' = M'/X\backslash Y$ of $M'$ such that $E(N') = C_1 \cup C_2$, and $\sqcap_{N'}(C_1, C_2) = 0$ and $\kappa_{N'}(C_1, C_2) = \kappa_{M'}(C_1,C_2)$ and $N'\big| C_i = M'\big| C_i,\ i = 1,2.$  Here we may assume that $X$ is independent since if contracting some elements of $X$ results in a loop $e\in X$, then deleting or contracting $e$ yields the same matroid, in which case one could assume that $e\in Y.$  
     
Let $N = M'\backslash Y = M' \big| C_1 \cup C_2 \cup X$ and let $\kappa_N(C_1, C_2) = t.$

\begin{observation}
For $i=1,2$, $C_i$ is the only circuit of $N$ in $C_i \cup X$ and for all subsets $X' \subseteq X,$ $r_N(C_i \cup X') = r_N(C_i) + |X'|.$ \label{obs-onlycirc}
\end{observation}

\begin{proof}
Suppose to the contrary that for some $i$, $C_i \cup X$ contains a circuit $C$ where $C \ne C_i.$
Then $C' = C - X$ is a cycle in $N' = N/X$ where $C' \subset C_i.$  However, this contradicts the assumption that $N' \big| C_i = M\big| C_i.$
It now follows that for all $i\in \{ 1,2 \}$ and for all subsets $X' \subseteq X,$ $r_N(C_i \cup X') = |C_i| - 1 + |X'| = r_N(C_i) + |X'|.$
\end{proof}

\begin{observation}
 $t = \kappa_N(C_1, C_2) = |X|.$\label{obs-X=t}
 \end{observation}
 
\begin{proof} Let $A \subset E(N)$ where $C_1 \subseteq A \subseteq E(N) - C_2.$  Let $X_1 = A \cap X$ and let $X_2 = X-A.$  Then 

\begin{align*}
\lambda_N(A) &= r_N(A) + r_N(E(N) - A) - r(N)\\ &= r_N(C_1) + |X_1| + r_N(C_2) + |X_2| - r(N)= |X|.\end{align*} 
The second equality follows from Observation \ref{obs-onlycirc} and the fact that $C_i,\ i = 1,2$ are skew circuits for which $C_1 \cup C_2$ spans $N.$
Thus it follows that $t = \kappa_N(C_1, C_2) = |X|.$ 
\end{proof}

For the remainder of this paper, we let $X = \{ x_1, \dots ,x_t \}$ and let $N$ be as described above. To prove Theorem \ref{the-main}, it will suffice to prove it for the matroid $N.$

For all subsets $A \subseteq E(N)$ and for all $j\in \{ 1,2 \},$ we shall let $A^j$ denote the set $A \cap C_j.$

\subsection{The circuits $D_i$}
Given that $C_1 \cup C_2$ spans $N$, it follows that for all $i\in [t]$ there is a circuit in $N$ which contains $x_i$ and which is contained in $(C_1 \cup C_2) + x_i.$  We let $D_i$ be one such circuit.    
For all subsets $\emptyset \ne I\subseteq [t],$ let $D_I = \sum_{i\in I}D_j$.  It is seen that $D_I$ is a cycle in $\C_N - \C_N(C_1 \cup C_2).$   Since for $j=1,2,$ $C_j$ is the only circuit in $C_j \cup X$ (by Observation \ref{obs-onlycirc}), it follows that for $j= 1,2,$ $C_j \not\subseteq D_I$.  For if for $C_j \subseteq D_I$, then $D_I + C_j$ is a cycle where $X \subset D_I + C_j \subseteq C_{3-j} \cup X$.  It would then follow by Observation \ref{obs-onlycirc} that $D_I + C_j = C_{3-j},$ which is impossible.
Thus we have the following:

\begin{observation}For $\emptyset \ne I \subseteq [t],$  and for all $j\in \{ 1,2 \},$ $\emptyset \ne D_I^j \subset C_j.$  Furthermore, no non-empty cycle of $\C_N(C_1 \cup C_2) = \{ \emptyset, C_1, C_2, C_1 \cup C_2 \}$ is contained in $D_I + C_j.$ \label{obs-noempty}\end{observation}

Note that if $I$ and $I'$ are distinct, nonempty subsets of $[t]$, then for $j=1,2,$ $D_I^j  \ne D_{I'}^j;$ for if  $D_I^j  = D_{I'}^j,$ then we would have for $I'' = I \triangle I',$ $D_{I''}^j  = \emptyset,$ contradicting the previous observation.

\subsection{When $D_I + C_j$ is not a circuit}

Suppose $I \subseteq [t]$ and $j\in \{ 1,2 \}.$  If for some $j\in [2],$ $D_I + C_j$ is not a circuit, then it breaks into two cycles and we will show that there is a specific structure in this case.

\begin{lemma}
Let $I \subseteq [t]$ and let $j\in \{ 1,2 \}.$ If $D_I + C_j$ is not a circuit, then there is a partition $I = I_1 \dot{\cup} I_2$ such that $D_{I_1}^j  \subandsup D_{I_2}^j$ and $D_{I_1}^{3-j} \parallelbowtie D_{I_2}^{3-j}$. Furthermore, if $D_I + C_1 + C_2$ is not a circuit, then there is a partition $I = I_1 \dot{\cup} I_2$ such that for $i =1,2,$ $D_{I_1}^i \subandsup D_{I_2}^i.$ \label{lem-useful2}
\end{lemma}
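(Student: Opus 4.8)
Write $D = D_I$ and recall that $D$ is a cycle in $\C_N$ with $\emptyset \ne D^j \subset C_j$ for both $j = 1,2$; also $D + C_j$ and $D + C_1 + C_2$ are cycles (symmetric differences of cycles). A cycle in a binary matroid that fails to be a circuit decomposes into at least two disjoint nonempty circuits; the content of the lemma is to read off, from such a decomposition, a corresponding partition of the index set $I$ together with containment/disjointness information on the traces $D^j$.

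\textbf{Plan for the first statement.} Suppose $D + C_j$ is not a circuit. Since $N$ is binary and $D + C_j$ is a cycle, write $D + C_j = F_1 \dot\cup F_2$ where $F_1, F_2$ are nonempty disjoint cycles (grouping the circuit-components into two nonempty parts). I would first argue that each $F_\ell$ meets $X$: if, say, $F_1 \cap X = \emptyset$, then $F_1 \subseteq C_1 \cup C_2$, and moreover $F_1 + F_2 = D + C_j$ while $F_1, F_2$ partition the $x_i$'s appearing in $D$, forcing $F_1$ to be a nonempty cycle inside $C_1 \cup C_2$; combined with $F_1 \subseteq (C_1 \cup C_2) + (X\cap F_1) = C_1\cup C_2$ and the fact that the only circuits inside $C_i \cup X$ are the $C_i$, one deduces $F_1 \in \{\emptyset, C_1, C_2, C_1\cup C_2\}$, and each case contradicts either nonemptiness, disjointness of $F_1,F_2$, or $\sqcap_N(C_1,C_2)=0$ (which gives that $C_1 \cup C_2$ is not a cycle containing $C_1$ or $C_2$ properly as a proper sub-cycle in the required way). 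Hence $X \cap F_\ell \ne \emptyset$ for $\ell = 1,2$. Now set $I_\ell = \{\, i \in I : x_i \in F_\ell \,\}$; since $X \cap (D+C_j) = X \cap D = \{x_i : i \in I\}$, this gives a partition $I = I_1 \dot\cup I_2$ into nonempty parts. The key identity is that $D_{I_1} + D_{I_2} = D_I = D$, and that $D_{I_\ell}$ is the unique cycle in $(C_1\cup C_2) + \{x_i : i\in I_\ell\}$ with the prescribed $x$-support; comparing with $F_1, F_2$ (which also have these $x$-supports and sum to $D + C_j$) one gets, for the $j$-coordinate, $D^j_{I_1} \triangle D^j_{I_2} = D^j \triangle C_j = C_j - D^j$, and for the $(3-j)$-coordinate, $D^{3-j}_{I_1} \triangle D^{3-j}_{I_2} = D^{3-j}$, i.e. $F_\ell^{3-j} = D^{3-j}_{I_\ell}$. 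Then: $F_1, F_2$ disjoint gives $D^{3-j}_{I_1} \parallel D^{3-j}_{I_2}$ or, more precisely, $D^{3-j}_{I_1} \cap D^{3-j}_{I_2} = \emptyset$, which is one of the two alternatives in $\parallelbowtie$; and on the $C_j$ side, $F_1^j \cap F_2^j = \emptyset$ with $F_1^j \cup F_2^j = C_j - D^j$ translates (since $F_\ell^j = D^j \triangle D^j_{I_\ell}$... ) into $D^j_{I_1} \cap D^j_{I_2} = D^j$ together with $D^j_{I_1}\cup D^j_{I_2} = C_j$; as $D^j_{I_\ell} \subseteq C_j$ and their intersection equals $D^j$ which is a common nonempty subset, and their union is all of $C_j$, one of them must contain the other — that is exactly $D^j_{I_1} \subandsup D^j_{I_2}$. (The $\parallelbowtie$ in condition ii) should likewise emerge as ``disjoint or complementary-in-$C_{3-j}$'', the complementary case occurring when the roles above are reversed; I would handle both $F_1 \cap C_{3-j}$ possibilities symmetrically.)

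\textbf{Plan for the second statement.} Now suppose $D + C_1 + C_2$ is not a circuit. Apply the same decomposition: $D + C_1 + C_2 = F_1 \dot\cup F_2$, show each $F_\ell$ meets $X$ by the same ``only circuits in $C_i \cup X$ are $C_i$'' argument (now using $\sqcap_N(C_1,C_2) = 0$ to rule out $F_\ell = C_1 \cup C_2$), define $I_\ell = \{ i \in I : x_i \in F_\ell\}$, and use $D_{I_1} + D_{I_2} = D$. This time, for each $i \in \{1,2\}$, the $C_i$-trace satisfies $F_1^i \cap F_2^i = \emptyset$ and $F_1^i \cup F_2^i = C_i - D^i$, while $F_\ell^i = D^i \triangle D^i_{I_\ell}$, so exactly as above $D^i_{I_1} \cap D^i_{I_2} = D^i \ne \emptyset$ and $D^i_{I_1} \cup D^i_{I_2} = C_i$, forcing $D^i_{I_1} \subandsup D^i_{I_2}$ for both $i = 1, 2$ simultaneously.

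\textbf{Main obstacle.} The delicate point is the bookkeeping that translates ``$F_1, F_2$ disjoint, union $= C_j - D^j$ on each coordinate'' into the stated $\subandsup$ / $\parallelbowtie$ relations between the $D_{I_\ell}$; one must be careful that $F_\ell^j$ is $D^j \triangle D^j_{I_\ell}$ and not $D^j_{I_\ell}$ itself, and track which of the two alternatives in $\parallelbowtie$ arises (plain disjointness versus complementarity within $C_{3-j}$) depending on whether the ``extra'' copy of $C_{3-j}$ in $D + C_j$ lands in $F_1$ or $F_2$. The other place needing care is the claim that every part $F_\ell$ of the decomposition must contain some $x_i$: this is where the hypothesis that $C_i$ is the unique circuit of $N|(C_i \cup X)$ and the skewness $\sqcap_N(C_1,C_2) = 0$ are essential, and it deserves to be spelled out rather than asserted.
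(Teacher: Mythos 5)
Your overall strategy is the same as the paper's: decompose $D_I + C_j$ into disjoint nonempty cycles $F_1 \dot{\cup} F_2$, argue each meets $X$, partition $I$ by $x$-support, and translate the disjointness of $F_1, F_2$ into the stated relations on the $D_{I_\ell}^j$. However, the bookkeeping you set up contains two genuine errors. First, $D_{I_\ell}$ is \emph{not} the unique cycle in $(C_1\cup C_2)\cup\{x_i : i\in I_\ell\}$ with that $x$-support: $D_{I_\ell} + H$ for any $H\in\{\emptyset, C_1, C_2, C_1\cup C_2\}$ has the same support, so $F_\ell$ is only determined up to such an $H$. The correct identities are $F_1 = D_{I_1}+H$ and $F_2 = D_{I_2}+H+C_j$ for a single fixed $H\in\C_N(C_1\cup C_2)$, not the formulas $F_\ell^{3-j}=D_{I_\ell}^{3-j}$ or $F_\ell^j = D^j\triangle D_{I_\ell}^j$ that you state (the latter is also internally inconsistent: it would force $F_1^j + F_2^j = D^j$, but $F_1^j+F_2^j$ must equal $C_j - D^j$). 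The two alternatives inside $\subandsup$ and $\parallelbowtie$ arise precisely from the cases $H^j\in\{\emptyset,C_j\}$ and $H^{3-j}\in\{\emptyset,C_{3-j}\}$; this is the $H$, $H'$ device in the paper's proof, which you gesture at in your ``main obstacle'' paragraph but do not carry through.

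Second, even granting your claimed computation $D_{I_1}^j\cap D_{I_2}^j=D^j$ and $D_{I_1}^j\cup D_{I_2}^j=C_j$, the inference ``so one must contain the other'' is a non sequitur: $A=\{1,2\}$ and $B=\{1,3\}$ have nonempty intersection $\{1\}$ and union $\{1,2,3\}$, yet neither contains the other. The containment actually falls out of $F_1^j\cap F_2^j=\emptyset$ directly: writing $F_1^j = D_{I_1}^j+H^j$ and $F_2^j = C_j-(D_{I_2}^j+H^j)$, disjointness unwinds to $D_{I_1}^j+H^j\subseteq D_{I_2}^j+H^j$ inside $C_j$, giving $D_{I_1}^j\subseteq D_{I_2}^j$ when $H^j=\emptyset$ and $D_{I_2}^j\subseteq D_{I_1}^j$ when $H^j=C_j$ (with strictness because $D_{I_1}^j\ne D_{I_2}^j$ for distinct nonempty index sets). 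As written the proof does not close; with the $H$-ambiguity tracked systematically it essentially becomes the paper's argument.
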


\begin{proof}
It suffices to prove the first statement for $j=1.$  Suppose that $D_I + C_1$ is not a circuit.
By Observation \ref{obs-noempty},
no non-empty cycle of $\C_N(C_1 \cup C_2) = \{ \emptyset, C_1, C_2, C_1 \cup C_2 \}$ is contained in $D_I + C_1.$   Since $D_I + C_1$ is not a circuit, there are disjoint cycles $G_i \in \C_N - \C_N(C_1 \cup C_2),\ i =1,2$ for which
$D_I + C_1 = G_1 \dot{\cup} G_2.$  Then there is a partition $I_1 \dot{\cup} I_2 = I$ of $I$ such that for $i =1,2,$  $I_i = \{ j \in I \ \big| \ x_j \in G_i \cap X \}.$
Since  $D_{I_1} \cap X = G_1 \cap X,$ it follows that for some $H \in \C_N(C_1 \cup C_2),$ $D_{I_1} = G_1 + H$ and  
$D_{I_2} = G_2 + H + C_1.$   It can be easily checked that for all $H\in \C(C_1 \cup C_2),$
$D_{I_1}^1  \subandsup D_{I_2}^1$ and $D_{I_1}^{2} \parallelbowtie D_{I_2}^{2}$.  For example, if $H = \emptyset,$ then $D_{I_1} = G_1,$ $D_{I_2} = G_2 + C_1$ and $D_{I_1}^1 \subset D_{I_2}^2$ and $D_{I_1}^1 \parallel D_{I_2}^2.$ 
%
%
%
%
%
%
%

To prove the second statement, suppose $D_I + C_1 + C_2$ is not a circuit.  Let $i \in I$ and replace $D_i$ with $D_i' = D_i + C_1.$ Let $D_I' = D_i' + D_{I-i}.$  Then $D_I' + C_2$ is not a circuit and hence by the first part, there is a partition $I = I_1 \dot{\cup} I_2$ where  $(D'_{I_1})^1 \parallelbowtie (D'_{I_2})^1$ and $(D'_{I_1})^2 \subandsup (D'_{I_2})^2.$  Here we assume $i\in I_1$ and $D'_{I_1} = D_i' + D_{I_1-i}$ and $D'_{I_2} = D_{I_2}.$
Given that $(D'_{I_1})^1 = D_{I_1}^1 + C_1,$ $(D'_{I_2})^1 = D_{I_2}^1,$ and $(D'_{I_k})^2 = D_{I_k}^2,\ k = 1,2$, it follows that $D_{I_1}^1 \subandsup D_{I_2}^1$ and $D_{I_1}^2 \subandsup D_{I_2}^2.$  This proves the second part.
\end{proof}

\section{Ramsey's Theorem and unavoidable configurations in matrices}

A key component in the proof of Theorem \ref{the-main} involves an application of {\bf Ramsey's theorem} in its form for hypergraphs (see \cite{MubSuk}).
The {\bf complete} $\mathbf{r}${\bf -uniform hypergraph} on a set of vertices $S$ is the hypergraph whose edges are all the $r$-subsets of $S.$  For $|S| = n,$ we let $K_n^r$ denote the complete $r$-hypergraph on $n$ vertices.  Ramsey's theorem for hypergraphs can be stated as follows:

\begin{theorem}
Suppose $s_1, \dots ,s_k$ are positive integers, all at least $r.$  There is an integer $R^r(s_1, \dots ,s_k)$ such that if $n \ge R^r(s_1, \dots ,s_k),$ then for any $k$-colouring of the edges of $K_n^r$ with colours $1, \dots ,k$, there exists $i \in [k]$ and a complete $r$-uniform subhypergraph $K_{s_i}^r$  on $s_i$ vertices, all of whose edges have colour $i.$
\label{the-Ram}
\end{theorem}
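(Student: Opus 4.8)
The plan is to prove the statement by the classical Erdős--Rado argument, which is a triple-nested induction: an outer induction on the number of colours $k$, then an induction on the uniformity $r$, and finally an induction on $s_1+\cdots+s_k$.

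\emph{Reduction to two colours.} First I would show it suffices to construct $R^r(s,t)$ for $k=2$. If $k\ge 3$, merge the last two colours: a $k$-colouring of $K_n^r$ becomes a $(k-1)$-colouring by declaring colours $k-1$ and $k$ to be a single new colour. By induction on $k$, for $n$ large we either find a monochromatic $K_{s_i}^r$ in one of the colours $1,\dots,k-2$ (and are done), or we find a vertex set $W$ with $|W|\ge R^r(s_{k-1},s_k)$ all of whose $r$-subsets received colour $k-1$ or $k$ in the original colouring; restricting $\chi$ to $W$ and applying the two-colour bound finishes it. Concretely this gives $R^r(s_1,\dots,s_k)\le R^r\!\big(s_1,\dots,s_{k-2},R^r(s_{k-1},s_k)\big)$, so from now on $k=2$.

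\emph{Induction on $r$.} The base case $r=1$ is the pigeonhole principle: $R^1(s,t)=s+t-1$. For $r\ge 2$ I would assume $R^{r-1}$ exists for every pair of arguments and run an inner induction on $s+t$, whose base case is $s=r$ (or, symmetrically, $t=r$): here $R^r(r,t)=t$, since with $t$ vertices, if some $r$-subset has colour $1$ we have a $K_r^r$ in colour $1$, and otherwise every $r$-subset has colour $2$. For the inductive step, set $a=R^r(s-1,t)$ and $b=R^r(s,t-1)$ (both available by the inner induction) and put $N=R^{r-1}(a,b)+1$. Given $n\ge N$ vertices and a $2$-colouring $\chi$ of the $r$-subsets, fix a vertex $v$ and colour each $(r-1)$-subset $S$ of the remaining $n-1\ge R^{r-1}(a,b)$ vertices by $\chi(S\cup\{v\})$; by the induction on $r$ we obtain a set $W$ with $|W|\ge a$ all of whose $(r-1)$-subsets got colour $1$ (the case $|W|\ge b$, colour $2$, being symmetric). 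Now apply $R^r(s-1,t)$ to $\chi$ restricted to $W$: either we find a $K_t^r$ in colour $2$ and are done, or we find $U\subseteq W$ with $|U|=s-1$ whose every $r$-subset has colour $1$. Then $U\cup\{v\}$ has $s$ vertices and every one of its $r$-subsets has colour $1$ --- the ones avoiding $v$ because they lie in $U$, the ones containing $v$ because they have the form $S\cup\{v\}$ with $S$ an $(r-1)$-subset of $W$. Hence $R^r(s,t)\le N$, closing all three inductions.

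\emph{Main obstacle.} There is no genuinely hard step here --- the whole content is the stepping-down trick that passes from $r$-subsets through a fixed vertex $v$ to $(r-1)$-subsets. The only thing requiring care is the bookkeeping of the nested inductions: one must verify that each Ramsey number invoked in the step, namely $R^{r-1}(a,b)$, $R^r(s-1,t)$, and $R^r(s,t-1)$, is genuinely furnished by a strictly earlier stage (respectively: smaller $r$; and same $r$ with smaller $s+t$), and that the reduction to two colours uses only smaller $k$. Once that scaffolding is laid out, the bounds follow mechanically.
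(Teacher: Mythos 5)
Your proof is correct; it is the standard Erd\H{o}s--Rado stepping-down argument, with a clean reduction to two colours, the pigeonhole base case at $r=1$, and the trick of colouring $(r-1)$-subsets through a fixed vertex $v$ to descend in uniformity. The bookkeeping of the nested inductions is handled properly: each invocation of a Ramsey number is justified by a strictly smaller parameter (fewer colours, smaller $r$, or smaller $s+t$), and the verification that $U\cup\{v\}$ is monochromatic correctly splits the $r$-subsets into those avoiding $v$ (inside $U$) and those containing $v$ (of the form $S\cup\{v\}$ with $S$ an $(r-1)$-subset of $W$). Note, however, that the paper itself does not prove this statement: it is quoted as a known theorem with a citation to the Mubayi--Suk survey, so there is no in-paper proof to compare against. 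Your proof supplies the classical argument that the citation points to.
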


As an application of the above theorem, we shall show that a circuit $C$ as described in S1) exists unless for some $j\in \{ 1,2 \}$ there are large subsets $I \subseteq [t]$ (where $t = |X|$) such that for all $J \in {\binom {I}k}$, $D_J + C_j$ is not a circuit.  This follows from the next lemma.

\begin{lemma}
Suppose that there are constants $\alpha_i(k), \ i = 1,2$, depending only on $k$, such that for all subsets $I \subseteq [t],$ if $| I | \ge \alpha_i(k),$ then there is a $k$-subset $J \subseteq I$ such that $D_J + C_i$ is a circuit.
Then there is a constant $\alpha(k)$ such that for all subsets $I \subseteq [t]$ where $| I | \ge \alpha(k),$ there is a  subset $J \subseteq I$ such that all $J' \in {\binom Jk}$ and for all $i\in [2],$ $D_J + C_i$ is a circuit.
\label{obs5}
\end{lemma}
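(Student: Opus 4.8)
The plan is to apply Ramsey's theorem (Theorem~\ref{the-Ram}) in its $r$-uniform hypergraph form with $r = k$. Given a large subset $I \subseteq [t]$, consider the complete $k$-uniform hypergraph on the vertex set $I$, and colour each $k$-subset $J \in \binom{I}{k}$ according to which of the cycles $D_J + C_1$ and $D_J + C_2$ fail to be circuits: say colour $1$ if $D_J + C_1$ is not a circuit, colour $2$ if $D_J+C_1$ is a circuit but $D_J + C_2$ is not, and colour $3$ if both $D_J + C_1$ and $D_J + C_2$ are circuits. (A fourth bookkeeping choice among colours $1,2$ when both fail is harmless; any consistent rule works.) If $|I|$ is at least $R^k(s_1, s_2, s_3)$ for suitably chosen $s_1, s_2, s_3$, Ramsey gives a monochromatic $K_{s_i}^k$ on some $I' \subseteq I$ with $|I'| = s_i$.

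The key point is that colours $1$ and $2$ cannot occur as the colour of a large monochromatic clique. If every $k$-subset of $I'$ has colour $1$, then $D_J + C_1$ is never a circuit for $J \in \binom{I'}{k}$; choosing $s_1 := \alpha_1(k)$ then contradicts the hypothesis, which guarantees a $k$-subset $J \subseteq I'$ with $D_J + C_1$ a circuit. Similarly, a monochromatic clique in colour $2$ of size $s_2 := \alpha_2(k)$ would mean $D_J + C_2$ is never a circuit on $\binom{I'}{k}$, contradicting the hypothesis for $i = 2$. Hence, setting $s_3 := k$, the only surviving possibility is a colour-$3$ clique $I'$ of size $k$, i.e.\ a $k$-subset $J = I'$ with both $D_J + C_1$ and $D_J + C_2$ circuits. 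So one takes $\alpha(k) := R^k\big(\alpha_1(k), \alpha_2(k), k\big)$.

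I would write this out by first fixing the colouring precisely, then invoking Theorem~\ref{the-Ram} with the three parameters above, then disposing of colours $1$ and $2$ by the two contradictions, and finally reading off the desired $J$ from the colour-$3$ clique. The only subtlety worth a careful sentence is the case analysis inside the colouring when both $D_J+C_1$ and $D_J+C_2$ fail to be circuits — one must make a deterministic choice (e.g.\ always assign colour $1$ in that case) so that the colouring is well-defined and the colour-$1$ monochromatic-clique argument still yields "$D_J + C_1$ is not a circuit for all $J \in \binom{I'}{k}$." There is no real obstacle here; the lemma is essentially a pigeonhole-to-Ramsey bootstrapping, and the work is entirely in choosing the Ramsey parameters correctly and being careful that "not a circuit" is the property preserved on the monochromatic cliques of colours $1$ and $2$.
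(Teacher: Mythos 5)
Your proof is correct and rests on the same tool (hypergraph Ramsey), but packaged differently from the paper. The paper applies two-colour Ramsey \emph{twice}, nested: first colouring $k$-subsets $J$ by whether $D_J+C_1$ is a circuit to extract a large subset all of whose $k$-subsets are ``blue,'' and then, inside that subset, colouring by whether $D_J+C_2$ is a circuit, arriving at $\alpha = R^k\bigl(R^k(\alpha_2,\alpha_2),\alpha_1\bigr)$. You collapse this into a \emph{single} three-colour application, with colour $1$ flagging ``$D_J+C_1$ not a circuit,'' colour $2$ flagging ``$D_J+C_1$ a circuit but $D_J+C_2$ not,'' and colour $3$ flagging ``both circuits,'' and you set $\alpha = R^k(\alpha_1,\alpha_2,k)$. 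Both are valid; your version is a cleaner bootstrap and yields a much smaller (non-nested) Ramsey constant, though explicit constants are irrelevant to the theorem. Your handling of the $s_3=k$ parameter is fine: $K_k^k$ has a single hyperedge, namely the $k$-set itself, so a colour-$3$ clique of size $k$ directly produces the required $J$. The only hygiene point, which you already anticipate, is that the colouring must be well-defined when both fail (default to colour $1$), and that $\alpha_1,\alpha_2 \ge k$ so the Ramsey parameters are admissible --- which holds automatically since the hypothesis presupposes a $k$-subset can be chosen from any $\alpha_i$-set.
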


\begin{proof}
Let $I \subseteq [t]$ and let $I' \subseteq I.$  Suppose we colour the $k$-subsets of $I'$ blue or red in such a way that a $k$-subset $J$ is coloured blue if $D_J + C_1$ is a circuit;  otherwise, it is coloured red.
By Theorem \ref{the-Ram}, there is a constant $R^k(\alpha_1, \alpha_1)$ such that if $|I'| \ge R^k(\alpha_1, \alpha_1)$ then for any $2$-colouring of the uniform $k$-hypergraph on $I'$, there is an $\alpha_1$-subset $I'' \subseteq I'$ whose associated $k$-hypergraph is monochromatic. By assumption, $I''$ must contain at least one blue hyperedge, and hence each hyperedge of $I''$ is blue.
Let $\alpha = R^k(R^k(\alpha_2, \alpha_2), \alpha_1)$ and suppose that $| I | \ge \alpha.$  Then either there exists an $R^k(\alpha_2, \alpha_2)$-subset $I' \subseteq I$ all of whose $k$-subsets are blue, or there exists an $\alpha_1$-subset all of whose $k$-subsets are coloured red.  By our assumptions on $\alpha_1$, the latter is impossible and thus the former holds.  Let $I' \subseteq I$ be an $R^k(\alpha_2, \alpha_2)$-subset  all of whose $k$-subsets are blue.  Now if we colour the $k$-subsets $J$ of $I'$ green or yellow, depending on whether $D_J + C_2$ is a circuit or not, it follows by Theorem \ref{the-Ram} that there is an $\alpha_2$-subset of $I'' \subseteq I'$ whose $k$-uniform hypergraph is monochromatic.  By our assumptions on $\alpha_2$, all the $k$-subsets of $I''$ must be green. Thus every $k$-subset $J$ of $I''$ is both blue and green and thus for $i=1,2,$ $D_J + C_i$ is a circuit.   
\end{proof}

 By the above lemma, we may assume for the remainder that for an integer $p$, chosen as large as needed, for all $I \in {\binom {[p]}k}$,  $D_I + C_1$ is not a circuit.  Consequently, for all $I \in {\binom {[p]}k},$ there is a partition $I = I_1 \dot{\cup} I_2$ such that $D_I + C_1 = G_1 + G_2$ where $G_i,\ i = 1,2$ are disjoint cycles such that for $i=1,2,$ $D_{I_i} \cap X = G_i \cap X.$  Importantly, we note that since $D_I +C_1$ is the disjoint union of the cycles $G_{I_1}$ and $G_{I_2}$, one can show (following the proof of Lemma \ref{lem-useful2}) that $D_{I_1}^1  \subandsup D_{I_2}^1$ and $D_{I_1}^{2} \parallelbowtie D_{I_2}^{2}$.

\subsection{A theorem on unavoidable configurations in matrices}\label{sec-forbid}

A matrix is {\bf simple} if it has no repeated rows.  For matrices $A$ and $B$ we write $B \prec A$ if $B$ can be obtained from  a submatrix of $A$ by permuting certain rows and columns of the submatrix.

For a positive integer $\ell \ge 1,$  $I_\ell$ will denote the $\ell \times \ell$ identity matrix and $I_\ell^c$ will denote the $0,1$-matrix which is its complement.  We let $T_\ell$ (resp. $T_\ell'$) denote the $\ell \times \ell$\ $0,1$-matrix having a $1$ in position $(i,j)$ if and only if $j\le i$ (resp. $i\le j$).  We say that two square $0,1$-matrices $B_1$ and $B_2$ have the same {\bf type} if for some integers $\ell_1$ and $\ell_2$, $(B_1, B_2) \in \{ (I_{\ell_1}, I_{\ell_2}),  (I_{\ell_1}^c, I_{\ell_2}^c), (T_{\ell_1}, T_{\ell_2}) \}.$

We shall make use of the following theorem of Balogh and Bollob\'{a}s \cite{AnsLu, BalBol }:

\begin{theorem}
Let $A$ be a simple $m\times n$ $0,1$-matrix. If $m\ge (2\ell)^{2^\ell}$, then there exists $B \in \{ I_\ell, I_\ell^c, T_\ell \}$ for which $B \prec A.$ \label{the-BalBol}  
\end{theorem}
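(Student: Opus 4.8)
\medskip
\noindent\textbf{Sketch of a proof.}
Begin with a harmless reduction: if $A$ has two equal columns, delete one. This preserves (row‑)simplicity, and for each $B\in\{I_\ell,I_\ell^c,T_\ell\}$ it preserves whether $B\prec A$, since a witnessing $\ell\times\ell$ submatrix uses $\ell$ distinct columns and a deleted column can be replaced by its surviving copy. So assume $A$ is also column‑simple. Identify each row with the set of columns where it is $1$; then $A$ is a family $\F$ of $m$ distinct subsets of the column set $E$, and $I_\ell\prec A$ iff there are distinct columns $c_1,\dots,c_\ell$ and sets $F_1,\dots,F_\ell\in\F$ with $F_i\cap\{c_1,\dots,c_\ell\}=\{c_i\}$; $I_\ell^c\prec A$ iff one can instead get $F_i\cap\{c_1,\dots,c_\ell\}=\{c_1,\dots,c_\ell\}\sm\{c_i\}$; and $T_\ell\prec A$ iff, after ordering some $\ell$ columns suitably, $F_i\cap\{c_1,\dots,c_\ell\}=\{c_1,\dots,c_i\}$. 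We induct on $\ell$. For $\ell=1$ the matrix has at least two distinct rows, hence a column, hence a $0$ entry (giving $I_1^c$) or a $1$ entry (giving $I_1=T_1$).

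\medskip
Two observations shape the inductive step. First, \emph{a chain $F_1\subset F_2\subset\cdots\subset F_{\ell+1}$ in $(\F,\se)$ forces $T_\ell\prec A$}: choosing $c_j\in F_{j+1}\sm F_j$ for $j\in[\ell]$ gives $\ell$ distinct columns (for $j<j'$, $c_j\in F_{j+1}\se F_{j'}$ but $c_{j'}\notin F_{j'}$), and $F_{a+1}\cap\{c_1,\dots,c_\ell\}=\{c_1,\dots,c_a\}$ for each $a$, so the rows $F_2,\dots,F_{\ell+1}$ and columns $c_1,\dots,c_\ell$ (both orders reversed) form $T_\ell$. Hence by Mirsky's theorem we may assume $(\F,\se)$ has no chain of $\ell+1$ sets, so $\F$ splits into at most $\ell$ antichains, and it remains only to bound an antichain $\A$ with no $I_\ell$ and no $I_\ell^c$. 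Second, for that bound one induces on $\ell$ again: pick a column $c$; if $c$ splits $\A$ fairly evenly, both $\A^c:=\{F\in\A:c\in F\}$ (restricted to $E\sm c$) and $\A_c:=\A\sm\A^c$ are large antichains to which the hypothesis for $\ell-1$ applies, returning $I_{\ell-1}$ or $I_{\ell-1}^c$ in each on $\ell-1$ columns avoiding $c$; one then tries to splice a matched pair — an $I_{\ell-1}^c$ from $\A^c$, or an $I_{\ell-1}$ from $\A_c$ — into $I_\ell^c$ resp.\ $I_\ell$ by readjoining $c$ and one further row, the extra row being located (or shown absent) using the configuration on the opposite side; if on the other hand no column splits $\A$ evenly, then every column lies in, or outside, all but boundedly many members of $\A$, and deleting such ``near‑constant'' columns one by one (each costing only a bounded number of rows) leads, after boundedly many steps, back to the even‑split case on a sub‑antichain with the same $\ell$. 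Tracking the factor‑$2$ loss at each even‑split and the factor‑$(\ell-1)$ loss at each pigeonhole over the columns of a returned configuration yields an explicit, if very wasteful, bound of the shape $(2\ell)^{2^\ell}$.

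\medskip
\noindent\textbf{Main obstacle.}
The real work is the combinatorial bookkeeping of this last induction: showing that \emph{every} branch — each choice of which side of the chosen column is the majority, each of the possibilities $I_{\ell-1}$ or $I_{\ell-1}^c$ returned on each side, and the case where the row needed to complete a spliced configuration is missing — funnels back to a strictly smaller instance of the same problem, while the accumulated size losses stay below the budget $(2\ell)^{2^\ell}$. One must also be careful that a configuration surfacing inside a pigeonholed sub‑case may appear only up to a permutation of its rows and columns, or in a ``shifted'' form, and must then be recognized as one of $I_\ell,I_\ell^c,T_\ell$ by invoking that $\prec$ allows arbitrary row/column permutations and passage to submatrices. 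By contrast, the reduction to antichains and the extraction of $T_\ell$ from a long chain are routine.
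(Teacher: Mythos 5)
This statement is Theorem~\ref{the-BalBol}, which the paper does \emph{not} prove: it is quoted verbatim from Balogh and Bollob\'as (reference [BalBol], see also [AnsLu]) and used as a black box. So there is no proof in the paper for your attempt to be compared with, and the honest answer to ``does the paper do it differently'' is that the paper does not do it at all.

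Judged on its own, your sketch starts well. The observation that a chain $F_1\subset\cdots\subset F_{\ell+1}$ produces $T_\ell$ (choosing $c_j\in F_{j+1}\setminus F_j$) is correct, and combining it with Mirsky's theorem genuinely reduces the problem to bounding the size of an \emph{antichain} that avoids $I_\ell$ and $I_\ell^c$, at the modest cost of a factor $\ell$. The weight of the theorem, though, sits entirely in that antichain bound, and there the sketch has real gaps rather than mere bookkeeping. Two concrete ones. First, the ``no column splits $\A$ evenly'' branch does not terminate as described: if every column is near-constant you propose to delete such columns one by one at a bounded row cost each, but the number of columns is not bounded in terms of $\ell$ (a column-simple antichain can have up to $2^{|\A|}$ columns), so ``boundedly many steps'' is unjustified and the total row loss is uncontrolled. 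Second, in the even-split branch, when the row needed to complete the splice is absent, the conclusion you can actually draw is that one of $\ell-1$ sub-families on the opposite side has size at least $|\A|/(\ell-1)$ with one fewer relevant column; that recursion keeps $\ell$ fixed and again ranges over an unbounded set of columns, so it does not obviously bottom out. These are not cosmetic: some additional structural idea (for instance, controlling how many columns can ever matter, or a recursion that genuinely decrements $\ell$ in every branch) is needed before the budget $(2\ell)^{2^\ell}$ can be verified. If you want to pursue this, I'd suggest consulting the Balogh--Bollob\'as paper or the Anstee--Lu survey for the actual inductive scheme rather than reconstructing it; for the purposes of the present paper, citing the theorem, as the author does, is the appropriate move.
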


For all positive integers $i$, let $\beta(i) = (2i)^{2^i}.$  The above theorem will allow us to choose a large subset of the circuits of
$\{ D_1, \dots ,D_p \}$ in a favorable way.

\subsubsection{The matrices $A,A'$, $A_i,B_i$, and $A_i'$}\label{sec-matrices}  
Recall that for $i=1,2,$ $C_i = \{ e_{i1}, \dots , e_{in_i} \}$.  For $k=1,2,$ we define a $p \times n_k$ \ 
$0,1$-matrix $A_k=[a_{ij}^{(k)}]$ where $a_{ij}^{(k )} =1$ if and only if $e_{k j} \in D_i^k = D_i \cap C_k.$  We observe that the matrix $A_k$ is simple.  Let $A$ be the $p\times (n_1 + n_2)$ matrix obtained by concatenating $A_1$ with $A_2$; that is, $A_1$ (resp. $A_2$) corresponds to the submatrix of $A$ formed by the first $n_1$ (resp. last $n_2$) columns of $A.$  

Suppose that $p \ge \beta(\beta(q)) = (2\beta(q))^{2^{\beta(q)}}.$  Since $A_1$ is simple, it follows by Theorem \ref{the-BalBol} that for some $B_1 \in  \{ I_{\beta(q)}, I_{\beta(q)}^c, T_{\beta(q)} \}$,  $B_1 \prec A_1.$ 
By permuting the rows of $A$ and its first $n_1$ columns, we may assume that $B_1$ is the submatrix of $A$ whose entries occupy the first $\beta(q)$ rows and first $\beta(q)$ columns of $A.$  Now let $A'$ be the $\beta(q) \times (n_1 +n_2)$ submatrix of $A$ corresponding to the first $\beta(q)$ rows of $A$ and let $A_1'$ (resp. $A_2'$)  be the submatrix of $A'$ formed by the first $n_1$ (resp. last $n_2$) columns of $A'$, noting that $A_i',\ i =1,2$ are simple since $A_i,\ i = 1,2$ are.  
It follows by Theorem \ref{the-BalBol} that there is a matrix $B_2 \in \{ I_q, I_q^c, T_q \}$ where $B_2 \prec A_2'.$

\section{The case where $B_1$ and $B_2$ are of the same type}\label{sec-sametype}

In this section, we will show that for $k$ even and $q$ large enough, $B_1$ and $B_2$ are not of the same type.  

\begin{noname}
Suppose $k$ is even and $B_1$ and $B_2$ are of the same type.  Then $B_1 = T_{\beta(q)}$ and $B_2 = T_q.$ \label{nona-B1=B2}
\end{noname}

\begin{proof}
Suppose that $B_1 \in \{ I_{\beta(q)}, I_{\beta(q)}^c \}$ and $B_2 \in \{ I_q, I_q^c \}.$  It is seen that one can permute the rows of $A'$ and the columns of $A_1'$ and/or the columns of $A_2'$, so as to obtain a matrix $A''$ and submatrices $A_i'', \ i = 1,2$ corresponding to $A_i',\ i = 1,2$, where for $i=1,2$, the submatrix $B_i'$ formed by the first $q$ columns of $A_i''$ is one of the matrices $I_q$ or $I_q^c$ and has the same type as $B_i.$  In this case, we will assume that the elements of $C_1 \cup C_2$ and circuits $D_i, \ i \in [q]$ are indexed so that $e_{ij}$ is the element corresponding to the $j$'th column of $A_i''$ and
$D_i$ corresponds to the $i$'th row of the matrix $A''.$

 Given that for $i=1,2,$ $B_i'$ has the same type as $B_i$, we have that $B_1' = B_2'.$ 
 Let $J \in {\binom {[q]}k}.$  By assumption, $D_J + C_1$ is not a circuit.  Thus 
there is a partition $J = J_1 \dot{\cup} J_2$ of $J$ such that  $D_{J_1}^1 \subandsup D_{J_2}^1$ and  $D_{J_1}^2 \parallelbowtie D_{J_2}^2$.  When we restrict to the elements of $E_1 = \{ e_{11}, \dots , e_{1q} \}$ in $C_1$ and the elements  of $E_2 = \{ e_{21}, \dots , e_{2q} \}$ in $C_2,$ the same relations hold (almost).  That is, 

\sms

i) \   $D_{J_1} \cap E_1 \subandsupeq D_{J_2} \cap E_2$ \ \ and \ \  ii)\ $D_{J_1} \cap E_1  \parallelbowtie D_{J_2} \cap E_2 .$ 

\sms

\noindent However, given that $B_1' = B_2'$, it follows by i) and symmetry that we also have
\begin{itemize}
\item[iii)] $D_{J_1} \cap E_2 \subandsupeq D_{J_2} \cap E_2.$ 
\end{itemize}

Since $B_2' \in \{ I_q, I_q^c \}$, it is an easy exercise to show that for $i=1,2,\ D_{J_i} \cap E_2 \ne E_2 .$  Since ii) and iii) both hold,  it follows that for some $i\in \{ 1,2 \},$  $D_{J_i} \cap E_2 = \emptyset.$  Without loss of generality, we may assume that this holds for $i=1.$
Clearly $B_2' \ne I_q$ for otherwise $\{ e_{2j}\in E_2 \ \big| \ j \in J_1 \} \subset D_{J_1} \cap E_2$.  Thus $B_2' = I_q^c.$  Since $k = |J|$ is even, $| J_1 |$ and $| J_2 |$ have the same parity.  Suppose $| J_1 |$ is even. If $j\in J_1,$  then $e_{2j} \not\in D_{j}^2$ and for all $i \in J_1 - j,$ $e_{2j} \in D_{i}^2.$  Thus $e_{2j} \in D_{J_1}^2$ since $| J_1 -j |$ is odd, a contradiction.  Suppose instead that $|J_1|$ is odd.  If $j\in J_2$ then it follows that for all $i \in J_1$, $e_{2j} \in D_{i}^2.$  It then follows that $e_{2j} \in D_{J_1}^2,$ since $| J_1 |$ is odd, a contradiction.  From the above, It follows that $B_1 = T_{\beta(q)}$ and $B_2= T_q.$
\end{proof}


\begin{noname}
If $k$ is even, then $B_1$ and $B_2$ are not of the same type.\label{nona-difftype}
\end{noname}

\begin{proof}
Suppose $k$ is even and $B_1$ and $B_2$ are of the same type.  By (\ref{nona-B1=B2}), $B_1 = T_{\beta(q)}$ and $B_2 = T_q.$  By assumption, the submatrix of $A_1'$ formed by the first $\beta(q)$ columns is $B_1.$  
We may assume, by permuting the columns of $A_2',$ that $B_2$ is the submatrix of $A_2'$ formed by rows $r_1 < r_2 < \dots  < r_q$ and columns $1, \dots ,q.$ 
Let $R = \{ r_1, \dots , r_q \}.$
 For $i=1,2,$ and $j= 1, \dots ,n_i$, let $e_{ij}$ be the element corresponding to the $j$'th column of $A_i'.$ Let $E_1 = \{ e_{11}, e_{12}, \dots ,e_{1\beta(q)} \}$ and let $E_2 = \{ e_{21}, \dots ,e_{2q} \}.$  For all $r\in R$ and $j\in [2],$ let $F_r^j = D_r \cap E_j$ and for all subsets $I \subseteq R,$ let $F_{I}^j = D_{I} \cap E_j.$  Given that $B_1 = T_{\beta(q)},$ it follows that for all $r\in R,$ $F_{r}^1  = \{ e_{11}, \dots ,e_{1r} \}$.  Since $B_2 = T_q$, we have that for all distinct $r,s\in R$, $F_{r}^2  \subandsup F_{s}^2.$
We define a poset on $R$ where for all distinct $r,s \in R,$ $r \prec s$ if $s < r$ and $F_{r}^2 \subset F_{s}^2.$  Let $\alpha$ be the size of the largest anti-chain in this poset. It follows by Dilworth's Theorem \cite{Dil}  that one can partition the poset into $\alpha$ chains.  Thus there is a chain of size at least $\frac q\alpha$. Since $\max \{ \alpha, \frac q\alpha \} \ge \sqrt{q},$ it follows that there is an anti-chain or a chain of size at least 
$\sqrt{q}.$  Assuming that $\sqrt{q} \ge k,$ there is a subset $S = \{ r_{i_1}, r_{i_2}, \dots ,r_{i_k} \} \subset R$ where $i_1 < i_2< \cdots < i_k$ and either $r_{i_1} \prec r_{i_2} \prec \cdots \prec r_{i_k}$ or $I$ is an anti-chain.  In the former case,
$F_{r_{i_1}}^2  \subset F_{r_{i_2}}^2 \subset \cdots \subset F_{r_{i_k}}^2$, and in the latter case, $F_{r_{i_1}}^2 \supset F_{r_{i_2}}^2 \supset \cdots \supset F_{r_{i_k}}^2.$
 In either case, we claim that $D_I + C_1$ is a circuit, leading to a contradiction.  Suppose that this is not the case. Then there is a partition $S_1 \dot{\cup} S_2$ of $S$ such that
 
 \vspace{0.1in}

i')\  $F_{S_1}^1  \subandsupeq F_{S_2}^1$ \ \ and \ \ ii')\ $F_{S_1}^2 \parallelbowtie F_{S_2}^2.$ 

\vspace{0.1in}

For $\ell=1,2$ and $j = 1, \dots ,k$ let $$a_{\ell j} = | S_\ell \cap \{ r_{i_j}, \dots ,r_{i_k} \} | \  \mathrm{and}\  b_{\ell j} =  | S_\ell \cap \{ r_{i_1}, \dots ,r_{i_j} \} | .$$
For $j=1, \dots ,k$ let $1 \le \beta_j \le q$ be such that $F_{r_{i_j}}^2 = \{ e_{21}, \dots ,e_{2\beta_i} \}.$ 
Since for all $j,j' \in [k],$ $e_{1r_{i_j}} \in F_{r_{i_{j'}}}^1$ iff $j' \ge j,$ it follows that for $\ell= 1,2$ and $j=1, \dots ,k,$ $e_{1r_{i_j}} \in F_{I_\ell}^1$ if and only if $a_{\ell j}$ is odd.  
Without loss of generality, we may assume that $r_{i_k} \in S_1.$  Then $a_{1k} =1$ and $a_{2k} =0$, implying that $e_{1r_{i_k}} \in F_{S_1}^1 - F_{S_2}^1.$  It now follows from i') that
$F_{S_1}^1 \supset F_{S_2}^1.$

 Suppose first that $F_{r_{i_1}}^2  \subset F_{r_{i_2}}^2 \subset \cdots \subset F_{r_{i_k}}^2$.  
 We may assume that $F_{r_{i_k}}^2 \ne E_2.$  Then $F_{S_1}^2 \cup F_{S_2}^2 \subset E_2$ and hence $F_{S_1}^2 \not\bowtie F_{S_2}^2.$  It follows that $F_{S_1}^2 \parallel F_{S_2}^2.$   Let $j\in [k]$ be such that $a_{2j}$ is odd. 
Then 
 $e_{1r_{i_j}} \in F_{S_2}^1.$  Furthermore, since $F_{S_2}^1 \subseteq F_{S_1}^1,$ it follows that $e_{1r_{i_j}} \in F_{S_1}^1$ and hence $a_{1j}$ is odd.  It now follows that $e_{2\beta_j} \in F_{S_1}^2 \cap F_{S_2}^2,$ a contradiction.
 

Suppose instead that $F_{r_{i_1}}^2 \supset F_{r_{i_2}}^2 \supset \cdots \supset F_{r_{i_k}}^2.$  We observe that for $i=1,2$ and $j= 1, \dots ,k,$ $e_{2\beta_j} \in F_{S_i}^2$ if and only if $b_{ij}$ is odd. We may assume that $F_{r_{i_1}}^2 \ne E_2.$ Then $F_{S_1}^2 \cup F_{S_2}^2 \subset E_2$ and hence $F_{S_1}^2 \not\bowtie F_{S_2}^2.$ 
It follows from ii') that $F_{S_1}^2 \parallel F_{S_2}^2.$  Note that since $k = |S|$ is even, $|S_1|$ and $|S_2|$ have the same cardinality.  Suppose $| S_1 |$ and $| S_2 |$ are both odd.  Then given that for $i=1,2$, $b_{ik} = | S_i |$, it would follow that $b_{1k}$ and $b_{2k}$ are both odd, implying that $e_{2\beta_k} \in F_{S_1}^2 \cap F_{S_2}^2,$  a contradiction. It follows that $| S_1 |$ and $| S_2 |$ are both even.
Suppose $r_{i_1} \in S_2.$  Since $| S_1 |$ and $| S_2 |$ are both even, it follows that $a_{12} = | S_1 |$ is even and $a_{22} = | S_2 | -1$ is odd, implying that $e_{1r_{i_2}} \in F_{S_2}^1 - F_{S_1}^1,$ a contradiction.  Thus it follows that $r_{i_1} \in S_1.$  Let $m$ be the largest integer such that $\{ r_{i_1}, \dots ,r_{i_m} \} \subset S_1.$  Suppose first that $m$ is odd.  Then $b_{1(m+1)} = m$ and $b_{2(m+1)} =1$ and hence $b_{1(m+1)}$ and $b_{2(m+1)}$ are both odd, implying that $e_{2\beta_{m+1}} \in F_{S_1}^2 \cap F_{S_2}^2,$ yielding
a contradiction.  Thus $m$ must be even.  Since $r_{i_{m+1}} \in S_2,$ it follows that $a_{1(m+2)} = | S_1 | -m$ which is even, and $a_{2(m+2)} = | S_2 | -1$, which is odd.  Thus $e_{1r_{i_{m+2}}} \in F_{S_2}^1 - F_{S_1}^1,$ yielding a contradiction.   

From the above, it follows that  i') and ii') can not both hold, yielding a contradiction.
\end{proof}

\section{The case where $B_1$ and $B_2$ are of different types}

By (\ref{nona-difftype}),  $B_1$ and $B_2$ are of different types (assuming $k$ is even).  We shall show that there exist cycles $C_i',\ i = 1,2$ as in S2).  
Recall the matrix $A'$ and matrices $A_i,\ i = 1,2$ defined in Section \ref{sec-matrices}.
Since $B_1$ and $B_2$ are of different types, it is seen that one can permute the rows of $A'$ and the columns of $A_1'$ and/or the columns of $A_2'$, so as to obtain a matrix $A''$ and submatrices $A_i'', \ i = 1,2$ corresponding to $A_i',\ i = 1,2$, where for $i=1,2$, the submatrix $B_i'$ formed by the first $q$ rows and $q$ columns of $A_i''$ has the same type as $B_i.$  We may assume that the elements of $C_1 \cup C_2$ and circuits $D_i, \ i \in [q]$ are indexed so that $e_{ij}$ is the element corresponding to the $j$'th column of $A_i''$ and $D_i$ corresponds to the $i$'th row of the matrix $A''$.

 %
 \subsection{$I$-triples}\label{sec-triple}
 
 By assumption, for all $I \in {\binom {[p]}k}$, $D_I + C_1$ is not a circuit.  Thus for all $I \in {\binom {[p]}k}$, we can assign a partition $I = I_1\dot{\cup}I_2$ to $I$ where $D_I + C_1 = G_{I_1} + G_{I_2},$ $G_{I_1}$ and $G_{I_2}$ being disjoint cycles for which $D_{I_i} \cap X = G_{I_i} \cap X, \ i =1,2.$  We shall adopt the convention that the least integer in $I$ belongs to $I_1$.  Let $H_I = D_{I_1} + G_{I_1},$ noting that $H_I \in \C (C_1 \cup C_2) = \{ \emptyset, C_1, C_2, C_1 \cup C_2 \}$ and $D_{I_2} + G_{I_2} = H_I + C_1.$
We shall refer to $(I_1, I_2,H_I)$ as the $I$-triple which is assigned to $I.$
Note that, as before, we have that $D_{I_1}^1  \subandsup D_{I_2}^1$ and $D_{I_1}^{2} \parallelbowtie D_{I_2}^{2}$.

In general, suppose $I \subseteq [p]$ and $I = S_1 \dot{\cup} S_2$ is a partition of $I$ where $D_I + C_1= G_{S_1} \dot{\cup} G_{S_2}$, $G_{S_1}, G_{S_2}$ being disjoint cycles such that for $i=1,2,$ $D_{S_i}\cap X = G_{S_i} \cap X$ and $D_{S_1} + G_1 = H$.  We refer to $(S_1, S_2)$ as an $\mathbf{I}${\bf-pair} and $(S_1, S_2, H)$ as an $\mathbf{I}$-{\bf triple}.

For all $I \in {\binom {[p]}k},$ we assign a binary vector $\bm{\tau}(I) = (a_1, \dots , a_k) \in \mathbb{Z}_2^k$ (called the {\bf sign} of $I$) as follows: Let $I = \{ i_1, \dots ,i_k \}$ where $i_1 < i_2 < \cdots < i_k.$  Then for $j=1, \dots ,k$, $a_j =1$ iff $i_j \in I_1$ (noting that by convention, $i_1 \in I_1$ and hence $a_1 =1$).

We also define a vector $\bm{\varsigma}(I) = (s_1, \dots ,s_{\ell})$ iteratively as follows:  Let $s_1$ be the maximum value of $i$ such that $a_1 = \cdots = a_i =1.$
Suppose $s_1, \dots ,s_j$ have been defined for some $j\ge 1.$  If $\sum_{i=1}^j s_i = k$, then we are done and $\ell = j.$  Otherwise, let $s_{j+1}$ be the maximum value of $i$ such that $a_{s_1 + \cdots + s_j +1} =  \cdots = a_{s_1+ \cdots + s_j +i}$ and continue with $j+1$ in place of $j.$  Roughly speaking, if one divides $\bm{\tau}(I)$ into blocks of $0$'s and $1$'s, then $s_j$ is the number of ``bits" in the $j$'th block and $\ell$ equals the number of blocks.

We have three important observations.

\begin{observation}
Suppose $k$ is even.  Then $B_1' = T_q$ and $B_2' \in \{ I_q, I_q^c \}.$\label{obs5.5}
\end{observation}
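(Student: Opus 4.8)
The plan is to rule out, one at a time, all the possibilities for $(B_1', B_2')$ other than $B_1' = T_p$ together with $B_2' \in \{I_p, I_p^c\}$. By the reductions just made we already know $B_1' \neq B_2'$, that $B_1'$ and $B_2'$ each lie in $\{I_p, I_p^c, T_p\}$ (and by Observation \ref{obs-matrix}, if $B_1' = T_p$ then $B_2'$ could a priori also be $T_p'$, but we have assumed $B_2' \neq T_p'$ as well), and crucially that there is \emph{no} $k$-subset $I \subseteq [p]$ with $D_I + C_1$ a circuit; this last fact will be the lever. So it suffices to show: (a) $B_1' \notin \{I_p, I_p^c\}$, i.e. $B_1' = T_p$; and (b) $B_2' \neq T_p$. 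In each case the strategy is to exhibit an explicit $k$-subset $I$ (with $k$ even) for which $D_I + C_1$ \emph{must} be a circuit, contradicting our standing assumption.

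For (a), suppose $B_1' \in \{I_p, I_p^c\}$. Pick any $I \in \binom{[2,p-1]}{k}$. If $D_I + C_1$ were not a circuit, Lemma \ref{lem-useful2} gives a partition $I = I_1 \dot\cup I_2$ with $D_{I_1}^1 \subandsup D_{I_2}^1$, and hence (restricting to $\{e_{11},\dots,e_{1p}\}$ as in the proof of Lemma \ref{lem-B1=B2}) $D_{I_1}^1 \cap \{e_{11},\dots,e_{1p}\} \subandsupeq D_{I_2}^1 \cap \{e_{11},\dots,e_{1p}\}$. I would then show this containment is impossible when $B_1' \in \{I_p, I_p^c\}$ and $k = |I|$ is even, by exactly the parity computation used for ``$B_2'$'' in Lemma \ref{lem-B1=B2}: if $B_1' = I_p$ both sides are nonempty and incomparable; if $B_1' = I_p^c$, a parity argument on $|I_1|$ (using that $I_1, I_2$ are nonempty and $|I_1| + |I_2|$ even) forces both sides to contain a common element yet miss another, so neither is contained in the other. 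Either way $D_I + C_1$ is forced to be a circuit, contradiction; hence $B_1' = T_p$.

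For (b), now knowing $B_1' = T_p$, suppose for contradiction $B_2' = T_p$, so $B_1' = B_2' = T_p$. But Lemma \ref{lem-B1=B2} then says directly that for every $I \in \binom{[2,p-1]}{k}$ (with $k$ even, which we have), $D_I + C_1$ is a circuit — again contradicting the assumption that no such $k$-subset exists. (The remaining excluded value $B_2' = T_p'$ was already assumed away in the paragraph preceding the observation, but note it would be handled identically via the second case of Lemma \ref{lem-B1=B2}.) Combining (a) and (b): $B_1' = T_p$ and $B_2' \in \{I_p, I_p^c\}$, as claimed.

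I expect the only mildly delicate point to be part (a) in the case $B_1' = I_p^c$: one must track the parity of $|I_1|$ carefully and use that both $I_1$ and $I_2$ are nonempty (guaranteed because $D_I + C_1$ splitting into two cycles forces a genuine bipartition of $I$), so that one cannot trivially make one side empty. This is, however, exactly the argument already carried out for the symbol $B_2'$ inside the proof of Lemma \ref{lem-B1=B2}, so it is routine rather than genuinely obstructive; the substance of the observation is really just the contrapositive of Lemma \ref{lem-B1=B2} together with the ``no good $k$-subset for $C_1$'' reduction.
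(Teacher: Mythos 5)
Your proposal is essentially correct and follows the same route as the paper's proof: take any $k$-subset $I$, use the standing assumption that $D_I + C_1$ fails to be a circuit together with Lemma \ref{lem-useful2} to obtain the partition $(I_1, I_2)$ with $D_{I_1}^1 \subandsup D_{I_2}^1$, and then rule out $B_1' \in \{I_p, I_p^c\}$ by showing that under those structures both set differences $D_{I_1}^1 \setminus D_{I_2}^1$ and $D_{I_2}^1 \setminus D_{I_1}^1$ (restricted to $\{e_{11},\dots,e_{1p}\}$) are nonempty, contradicting containment. The one small presentational difference is in step (b): the paper simply reads off $B_2' \in \{I_p, I_p^c\}$ from the standing assumptions $B_1' \ne B_2'$ (after establishing $B_1' = T_p$) and $B_2' \ne T_p'$, whereas you also reconfirm $B_2' \ne T_p$ via the contrapositive of Lemma \ref{lem-B1=B2} — a valid redundancy, since the paper already reduced to $B_1' \ne B_2'$ precisely by that lemma. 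One minor imprecision: you describe your $I_p^c$ computation as ``exactly the parity computation used for $B_2'$'' in Lemma \ref{lem-B1=B2}, but that argument there starts from the derived condition $D_{I_1}^2 \cap \{e_{21},\dots,e_{2p}\} = \emptyset$, whereas here the starting point is the containment $D_{I_1}^1 \subandsup D_{I_2}^1$; the parity idea is the same, but the contradiction lands differently, and your summary phrase ``both sides contain a common element yet miss another'' should instead say that each side contains an element the other misses. These are cosmetic, not substantive, issues.
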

\begin{proof} Let  $I \in {\binom {[q]}k}$.  We have i)\ $D_{I_1}^1 \subandsup D_{I_2}^1$ and ii) $D_{I_1}^2 \parallelbowtie D_{I_2}^2$.
Now i) implies that $D_{I_1} \cap \{ e_{11}, \dots ,e_{1q} \} \subandsupeq D_{I_2} \cap \{ e_{11}, \dots ,e_{1q} \}$ and this in turn implies that $B_1' \ne I_q.$  Suppose $B_1' = I_q^c.$   Given that $k = |I|$ is even, it follows that $| I_1 |$ and $| I_2 |$ have the same parity. Let $i\in I_1.$  Then $e_{1i} \not\in D_i^1$ and $\forall j \in I_1 -i,$ $e_{1i} \in D_j^1$.  Moreover, for all $j\in I_2,$ $e_{1i} \in D_{j}^1.$ Thus if $| I_1|$ is even, $e_{1i} \in D_{I_1}^1 - D_{I_2}^1$ and if $|I_1|$ is odd, then $e_{1i} \in D_{I_2}^1 - D_{I_1}^1$.  However, the same observation applies for $I_2$ in place of $I_1.$ Thus $D_{I_1}^1 - D_{I_2}^1 \ne \emptyset$ and $D_{I_2}^1 - D_{I_1}^1 \ne \emptyset$ contradicting the fact that $D_{I_1}^1 \subandsup D_{I_2}^1.$   We conclude that $B_1' \ne I_p^c$ and hence $B_1' = T_p$ and $B_2' \in \{ I_p, I_p^c \}.$ 
%
 %
\end{proof}  

\begin{observation}
Suppose $k$ is even.  Then for all $I \in {\binom {[q]}k},$ $D_I + C_2$ and $D_I + C_1 + C_2$ are circuits. \label{obs6}
\end{observation}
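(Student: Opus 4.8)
\textbf{Proof proposal for Observation \ref{obs6}.}

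The plan is to derive both statements from Lemma \ref{lem-useful2} together with the structural information now in hand, namely Observation \ref{obs5.5} which tells us $B_1' = T_p$ and $B_2' \in \{ I_p, I_p^c \}$, and the standing assumption that there is no $k$-subset $I \subseteq [p]$ for which $D_I + C_1$ is a circuit. Fix $I \in {\binom {[p]}k}$. Since $D_I + C_1$ is not a circuit, Lemma \ref{lem-useful2} supplies a partition $I = I_1 \dot{\cup} I_2$ with $D_{I_1}^1 \subandsup D_{I_2}^1$ and $D_{I_1}^2 \parallelbowtie D_{I_2}^2$. I would first argue that, because $B_1' = T_p$, the containment relation on the $C_1$-parts actually forces a contradiction unless the two parts are ``nested'' in the very rigid way a triangular matrix dictates; but the real payoff is on the $C_2$-side: since $B_2' \in \{ I_p, I_p^c \}$, I will show that $D_{I_1}^2 \parallelbowtie D_{I_2}^2$ cannot hold, exactly as in the contradiction arguments run in the proof of Lemma \ref{lem-B1=B2} (the identity-matrix and complement-of-identity cases there). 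The point is that when the $C_2$-incidence pattern is $I_p$ or $I_p^c$, the symmetric difference $D_{I}^2 = D_{I_1}^2 + D_{I_2}^2$ restricted to $\{e_{21},\dots,e_{2p}\}$ always has both $D_{I_1}^2$ and $D_{I_2}^2$ meeting it nontrivially regardless of the parities of $|I_1|,|I_2|$, so neither $\parallel$ nor $\bowtie$ is possible. This contradiction shows $D_I + C_2$ \emph{is} a circuit.

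For the second statement, $D_I + C_1 + C_2$, I would argue similarly using the ``furthermore'' clause of Lemma \ref{lem-useful2}: if $D_I + C_1 + C_2$ were not a circuit, there would be a partition $I = I_1 \dot{\cup} I_2$ with $D_{I_1}^i \subandsup D_{I_2}^i$ for \emph{both} $i = 1,2$. The $i = 2$ condition $D_{I_1}^2 \subandsup D_{I_2}^2$ is even stronger than $\parallelbowtie$, so the same analysis of the $B_2' \in \{I_p, I_p^c\}$ incidence pattern applies verbatim and yields a contradiction: with a triangular-free (identity-type) pattern one cannot have one $C_2$-part strictly contained in the other once we restrict to the controlled columns, by the parity bookkeeping already used. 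Hence $D_I + C_1 + C_2$ is a circuit as well.

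The main obstacle I anticipate is making the parity/containment bookkeeping on the restricted sets $D_{I_j}^2 \cap \{e_{21},\dots,e_{2p}\}$ fully airtight in all sub-cases ($B_2' = I_p$ versus $B_2' = I_p^c$, and the various parities of $|I_1|$, $|I_2|$), since this is precisely where the proof of Lemma \ref{lem-B1=B2} had to split into several cases; but all the needed computations are instances of ones already carried out there, so it is a matter of transcription rather than new ideas. A secondary point to be careful about: one must check that $I \in {\binom{[p]}{k}}$ (as opposed to $I \in {\binom{[2,p-1]}{k}}$ as in Lemma \ref{lem-B1=B2}) does not break the argument — but the contradictions for the $I_p$ and $I_p^c$ patterns do not use the endpoints being excluded, only the triangular one did, so this is fine.
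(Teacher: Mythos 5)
Your first half, the $D_I + C_2$ claim, contains a genuine logical error together with a false sub-claim.

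You begin by invoking the standing assumption that $D_I + C_1$ is not a circuit and apply Lemma \ref{lem-useful2} with $j = 1$, getting a partition with $D_{I_1}^1 \subandsup D_{I_2}^1$ and $D_{I_1}^2 \parallelbowtie D_{I_2}^2$. You then assert that $\parallelbowtie$ on the $C_2$-side is impossible because $B_2' \in \{I_p, I_p^c\}$, and infer that $D_I + C_2$ is a circuit. Two things go wrong. First, the sub-claim is false: identity-type patterns make $\parallelbowtie$ easy, not impossible. If $B_2' = I_p$, then $D_{I_1}^2 \cap \{e_{21},\dots,e_{2p}\} = \{e_{2j} : j \in I_1\}$ and $D_{I_2}^2 \cap \{e_{21},\dots,e_{2p}\} = \{e_{2j} : j \in I_2\}$, and since $I_1, I_2$ are disjoint these restricted sets are \emph{always} parallel. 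If $B_2' = I_p^c$, a parity calculation shows the restricted sets are parallel when $|I_1|, |I_2|$ are both even and bowtie (relative to $\{e_{21},\dots,e_{2p}\}$) when both are odd; again $\parallelbowtie$ always holds. What the identity patterns actually obstruct in Lemma \ref{lem-B1=B2} is not $\parallelbowtie$ by itself but the forced emptiness of one of the parts, which comes from the \emph{conjunction} of conditions iv) and v) there — a conjunction you do not have here. Second, even if the contradiction you claim were real, it would refute the hypothesis you started from, namely that $D_I + C_1$ is not a circuit; that would contradict the standing assumption of this section of the paper, and it would say nothing at all about whether $D_I + C_2$ is a circuit.

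The correct argument is to suppose $D_I + C_2$ is \emph{not} a circuit and apply Lemma \ref{lem-useful2} with $j = 2$, which gives a partition $I = S_1 \dot{\cup} S_2$ with $D_{S_1}^2 \subandsup D_{S_2}^2$ (containment on the $C_2$-side, not $\parallelbowtie$). It is \emph{this} containment that is impossible when $B_2'$ is identity-type: with $B_2' = I_p$ the restricted parts are disjoint and both nonempty, so neither contains the other; with $B_2' = I_p^c$ a parity argument (as in Observation \ref{obs5.5}) shows both $D_{S_1}^2 - D_{S_2}^2$ and $D_{S_2}^2 - D_{S_1}^2$ are nonempty. Your second half — for $D_I + C_1 + C_2$ — does set up the contradiction correctly by supposing the sum is not a circuit and using the ``furthermore'' clause to get $D_{S_1}^2 \subandsup D_{S_2}^2$, though the throwaway remark that $\subandsup$ is ``even stronger than'' $\parallelbowtie$ is false (neither implies the other); fortunately you do not actually use that claim. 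The observation that restricting $I$ to $[p]$ rather than $[2,p-1]$ is harmless here, because the boundary issues only arose in the triangular case, is correct and worth keeping.
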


\begin{proof}
Let $I \in {\binom {[q]}k}$ and suppose that $D_I + C_2$ is not a circuit.  Then it follows by Lemma \ref{lem-useful2} that there is a partition $I = S_1 \dot{\cup} S_2$ of $I$ for which $D_{S_1}^2 \subandsup D_{S_2}^2$.  
It then follows that  $D_{S_1} \cap \{ e_{11}, \dots ,e_{1q} \} \subandsupeq D_{S_2} \cap \{ e_{11}, \dots ,e_{1q} \}$.  By Observation \ref{obs5.5}, $B_2' \in \{ I_q, I_q^c \}$.  Since the above implies that $B_2' \ne I_q,$ it follows that $B_2' = I_q^c.$  Since $k$ is even, one can show using similar arguments as before that $D_{S_1}^2 - D_{S_2}^2 \ne \emptyset$ and $D_{S_2}^2 - D_{S_1}^2 \ne \emptyset,$ contradicting the fact that $D_{S_1}^2 \subandsup D_{S_2}^2.$  Thus $D_I + C_2$ is a circuit. 

To show that $D_I + C_1 + C_2$ is a circuit, we note that if this is not the case, then Lemma \ref{lem-useful2} implies that there is a partition $I = S_1 \dot{\cup} S_2$ of $I$ such that for $i=1,2,$ $D_{S_1}^i \subandsup D_{S_2}^i$.  In this case, $B_2' \ne I_p$ and thus $B_2' = I_q^c.$ However, one can now show as before that $D_{S_1}^2 - D_{S_2}^2 \ne \emptyset$ and $D_{S_2}^2 - D_{S_1}^2 \ne \emptyset,$ contradicting the fact that $D_{S_1}^2 \subandsup D_{S_2}^2.$  
\end{proof}

\begin{observation}If $k$ is even, then we may assume that $B_2' = I_q.$\label{obs6.1}
\end{observation}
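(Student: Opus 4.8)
The plan is to rule out the remaining possibility $B_2' = I_p^c$ (the only alternative to $B_2' = I_p$ permitted by Observation \ref{obs5.5}) by re-choosing the auxiliary circuits. Assuming $B_2' = I_p^c$, I would replace each $D_i$, $i \in [p]$, by $D_i' := D_i + C_2$, and check that $D_1', \dots, D_p'$ is again an admissible family of auxiliary circuits, now with $B_2' = I_p$, while none of the conclusions reached so far is lost. Since the matroid $N$, the circuits $C_1$ and $C_2$, the set $X$, and the linkage $t$ are all untouched, this is a legitimate move.

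Three points need checking. \emph{(i) Each $D_i'$ is a circuit.} It is a cycle, it contains $x_i$ (since $x_i \notin C_2$), and it lies in $(C_1 \cup C_2) + x_i$. If it were a disjoint union of two or more circuits, then since $D_i' \cap X = \{x_i\}$ all but one of these circuits would be disjoint from $X$, hence a nonempty member of $\C_N(C_1 \cup C_2) = \{\emptyset, C_1, C_2, C_1 \cup C_2\}$ contained in $D_i'$; but $(D_i')^1 = D_i^1$ and $(D_i')^2 = C_2 - D_i^2$ are nonempty proper subsets of $C_1$ and $C_2$ respectively, so $D_i'$ contains none of $C_1$, $C_2$, $C_1 \cup C_2$ — this is the observation opening the proof of Lemma \ref{lem-useful2}. \emph{(ii) Nothing already proved is disturbed.} Since $k$ is even, $D_I' = \sum_{i\in I}(D_i + C_2) = D_I + k\,C_2 = D_I$ for every $k$-subset $I \subseteq [p]$, so Observations \ref{obs5.5} and \ref{obs6}, and the running assumption that no $k$-subset $I \subseteq [p]$ has $D_I + C_1$ a circuit, all remain valid verbatim. \emph{(iii) The effect on the matrices.} As $(D_i')^1 = D_i^1$, the matrix $A_1$, and hence $B_1' = T_p$, is unchanged; as $(D_i')^2 = C_2 - D_i^2$, the matrix $A_2$ is replaced by its entrywise complement, so $B_2' = I_p^c$ becomes $(I_p^c)^c = I_p$. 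After re-indexing we are in the case $B_1' = T_p$, $B_2' = I_p$, as claimed.

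There is no genuine obstacle here; the argument is essentially bookkeeping, the only point requiring a little care being (i), which is immediate from the description of $\C_N(C_1 \cup C_2)$ together with the fact that every $D_i^j$ is a nonempty proper subset of $C_j$. The reason the substitution is harmless is precisely that $k$ is even, so the even-size sums $D_I$ on which the earlier observations depend are not altered at all.
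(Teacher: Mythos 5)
Your proof is correct and follows the paper's own argument almost verbatim (replace $D_i$ by $D_i' = D_i + C_2$, observe that $D_I' = D_I$ because $k$ is even, hence all earlier observations persist, while $A_2$ is replaced by its complement so $I_p^c$ becomes $I_p$). The only difference is that you supply the brief verification that $D_i'$ is actually a circuit, which the paper asserts without comment; this is a worthwhile detail, not a different route.
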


To justify the above,
suppose $k$ is even.  Then for all $i\in [q],$ replace $D_i$ by the circuit $D_i' = D_i + C_2.$  Given that $k$ is even, it follows that for all $I \in {\binom {[p]}k},\ D_I' = \sum_{i\in I}D_i' = D_I.$  Thus for all $I \in {\binom {[p]}k},$ $D_I' + C_1$ is not a circuit and one can use all the previous arguments with $D_i'$ in place of $D_i.$  If $B_2' = I_q^c,$ then the corresponding matrix $B_2'$ for the circuits $D_i',\ i \in [q]$ will be $I_q.$  Because of this, we may assume that $B_2' = I_q.$  

\begin{lemma}
There is a constant $\xi(k,q)$, depending only on $k$ and $q$, such that provided $p \ge \xi,$ there is a $q$-subset $Q \subset [p]$ such that for all $I,J\in {\binom {Q}k},$ $\bm{\tau}(I) = \bm{\tau}(J)$ and $H_I = H_J.$
\label{lem-samesig}
\end{lemma}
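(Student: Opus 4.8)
The plan is a single application of Ramsey's theorem for $k$-uniform hypergraphs (Theorem \ref{the-Ram}). First I would dispose of the degenerate cases: we may assume $q \ge k$, since if $q < k$ then ${\binom Qk} = \emptyset$ and the condition on $I,J$ is vacuous, while if $q = k$ there is a single $k$-subset of $Q$; in either case any $q$-subset of $[p]$ works as soon as $p \ge q$.

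The key observation is that the pair $(\bm{\tau}(I), H_I)$ takes only boundedly many values, the bound depending only on $k$. Indeed $\bm{\tau}(I) \in \mathbb{Z}_2^k$ (in fact, by our convention that the least element of $I$ lies in $I_1$, it lies in the affine subset with first coordinate $1$, though we do not need this), so it has at most $2^k$ possible values; and $H_I \in \C_N(C_1 \cup C_2) = \{ \emptyset, C_1, C_2, C_1 \cup C_2 \}$, so it has at most $4$ possible values. Hence, having already committed — once and for all — to one associated $I$-quintuple $(I_1, I_2, H_I, G_{I,1}, G_{I,2})$ for every $I \in {\binom {[p]}k}$, the assignment $I \mapsto (\bm{\tau}(I), H_I)$ is a well-defined colouring of the edges of $K_p^k$ using at most $N_0 := 2^{k+2}$ colours.

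I would then set $\xi(k,q) := R^k(q, q, \dots, q)$ with $N_0$ copies of $q$ — legitimate since $q \ge k$. Assuming $p \ge \xi$, Theorem \ref{the-Ram} produces a colour and a $q$-subset $Q \subseteq [p]$ all of whose $k$-subsets receive that colour; unwinding the definition of the colouring, this says precisely that $\bm{\tau}(I) = \bm{\tau}(J)$ and $H_I = H_J$ for all $I, J \in {\binom Qk}$, and the constant $\xi$ so obtained depends only on $k$ and $q$, as required.

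There is essentially no obstacle here beyond bookkeeping. The one point that must be stated carefully is that $\bm{\tau}(I)$ and $H_I$ are genuinely functions of $I$ alone: this is exactly why the earlier construction fixes a single choice of associated $I$-quintuple for each $I$ before $\bm{\tau}$ is defined. Granting that, the lemma reduces to the finiteness of the colour set (immediate from $\bm{\tau}(I) \in \mathbb{Z}_2^k$ and $H_I \in \{ \emptyset, C_1, C_2, C_1 \cup C_2 \}$) together with hypergraph Ramsey.
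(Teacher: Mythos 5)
Your argument is essentially identical to the paper's: colour each $k$-subset $I$ of $[p]$ by the pair $(\bm{\tau}(I), H_I)$, observe there are at most $2^{k+2}$ colours, and take $\xi = R^k(q,\dots,q)$ with $2^{k+2}$ copies of $q$ by Theorem \ref{the-Ram}. The extra remarks about the degenerate case $q<k$ and about $\bm{\tau}, H_I$ being well-defined functions of $I$ are sound but not needed beyond what the paper already implicitly assumes.
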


\begin{proof}
We assign to each $k$-subset $I \in {\binom {[p]}k}$ the pair $(\bm{\tau}(I)), H_I)$, of which there are at most $2^{k+2}$ possible pairs.  It follows by Theorem \ref{the-Ram} that there is a constant 
$R^k(\overbrace{q, \dots ,q}^{2^{k+2}})$ such that provided $p \ge R^k(q, \dots ,q)$, there is a $q$-subset $Q \subset [p]$ such that all $k$-subsets of $Q$ are monochromatic; that is, for all $I, J \in {\binom {Q}k},$ $\bm{\tau}(I) = \bm{\tau}(J)$ and $H_I = H_J.$  Thus $\xi = R^k(q, \dots ,q)$ is the required constant.
\end{proof}

By the above, we shall assume that $Q \subseteq [p]$ is a $q$-subset such that for all $I \in {\binom {Q}k},$ $\bm{\tau}(I) = (a_1, \dots ,a_k), \ \varsigma(I) = (s_1, \dots ,s_{\ell})$ and $H_I = H.$ Furthermore, for convenience, we may assume that $Q = [q].$

\subsection{Finding disjoint sets}

To complete the proof of Theorem \ref{the-main}, we shall show that one can find cycles in $N$ satisfying S2) (with $N$ in place of $M$).   To do this, it will suffice to prove the next lemma. 
%

\begin{lemma}
Assuming $k$ is even, there are four pairwise disjoint sets $U_i \subset [q],\ i \in [4]$ such that  for all $1 \le i < j \le 4,$ $(U_i, U_j)$ is a $(U_i \cup U_j)$-pair and $|U_i \cup U_j| \ge k.$
\label{lem-disjointtrips}
\end{lemma}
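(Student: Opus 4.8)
The goal is to find pairwise disjoint subsets $U_1,\dots,U_m$ of $[q]$ such that for every pair $i<j$, the union $U_i \cup U_j$ carries an $I$-triple with partition $(U_i,U_j)$ and ``middle cycle'' $H$, and such that $|U_i \cup U_j| \ge k$. The natural approach is to build the $U_i$ as translates of a single ``block pattern'' determined by the common sign $\bm{\tau}(I)=(a_1,\dots,a_k)$ and its run-decomposition $\bm{\varsigma}(I)=(s_1,\dots,s_\ell)$, which we have arranged (via Lemma~\ref{lem-samesig}, on the set $Q=[q]$) to be the same for every $I \in {\binom{Q}{k}}$. First I would dissect a generic $k$-subset $I=\{i_1<\cdots<i_k\}$ of $[q]$ into its $\ell$ runs $R_1,\dots,R_\ell$ according to $\bm{\varsigma}$, where $R_h$ is the block of consecutive $i_j$'s all lying in $I_1$ (if $h$ is odd) or all in $I_2$ (if $h$ is even), reading off from the fact that $a_j$ is constant on each run. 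Since $(U_i,U_j)$ must be the $I_1/I_2$ split of $U_i\cup U_j$, and $i_1\in I_1$ by convention, the sets $U_i$ cannot simply be arbitrary — the odd-indexed runs must come from one $U$ and the even-indexed runs from the other. The key structural step is therefore to choose $m$ disjoint ``templates'' $U_1,\dots,U_m$ so that for any two of them, the interleaving of their elements on the line $[q]$ reproduces exactly the run pattern $\bm{\varsigma}$, with $U_i$ playing the role of $I_1$ and $U_j$ the role of $I_2$ (or vice versa, after checking the convention on minima).

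The cleanest way to do this: note $\ell$ is even or odd depending on $\bm{\tau}$; WLOG partition the runs into the ``$I_1$-runs'' (odd $h$) and ``$I_2$-runs'' (even $h$). Pick widely separated intervals $W_1 < W_2 < \cdots$ in $[q]$, and inside consecutive intervals place alternately a copy of the concatenated $I_1$-pattern and the $I_2$-pattern, so that any pair picks up alternating runs. Then I would verify that for $i<j$, the $k$-set $U_i\cup U_j$ has $\bm{\varsigma}(U_i\cup U_j)=\bm{\varsigma}$ exactly (this is a bookkeeping check on which consecutive elements fall in which $U$), whence the associated triple is $(U_i,U_j,H)$ by the monochromaticity of $Q$: the partition is forced by the sign, and the middle cycle $H_{U_i\cup U_j}=H$ again by Lemma~\ref{lem-samesig}. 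The lower bound $|U_i\cup U_j|\ge k$ is automatic since $U_i\cup U_j$ has exactly $k$ elements by construction (we build each pairwise union to be a $k$-subset of $Q$). The number $m$ of disjoint templates we can fit is roughly $q$ divided by the total pattern length, which is bounded in terms of $k$; so ``$q$ large enough'' gives any prescribed $m$, and ultimately $m$ will be chosen large enough (in terms of $k$) in the next stage to force S2).

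**Main obstacle.** The delicate point is the convention ``the least integer of $I$ belongs to $I_1$'': when we form $U_i \cup U_j$ with $i<j$, we need the minimum element to lie in the set designated as $I_1$, and this must be consistent across \emph{all} pairs simultaneously. If $U_1,\dots,U_m$ are placed in order along $[q]$, then for $i<j$ the minimum of $U_i\cup U_j$ always lies in $U_i$, so $U_i$ must always play the $I_1$-role; but then a third set $U_\ell$ with $\ell>j$ must play $I_2$ relative to $U_i$ and also relative to $U_j$, forcing its pattern to be a pure ``$I_2$-pattern'' while $U_i$ is a pure ``$I_1$-pattern'' — and $U_j$ has to be simultaneously an $I_2$-pattern (relative to $U_i$) and an $I_1$-pattern (relative to $U_\ell$). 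Resolving this requires that the $I_1$-pattern and $I_2$-pattern be ``compatible'' in the sense that one refines or interleaves with the other appropriately for every ordered pair; I expect this forces us to use the special structure established in Observations~\ref{obs5.5}--\ref{obs6.1} (namely $B_1'=T_p$, $B_2'=I_p$), which constrains the $D_i$ enough that the sign $\bm{\tau}$ and its run pattern have a specific, exploitable shape rather than being arbitrary. Working out precisely which $\bm{\varsigma}$ patterns can arise, and hence how to nest the templates, is where the real work lies; everything else is counting and appeals to Lemma~\ref{lem-samesig} and the definition of $I$-triple.
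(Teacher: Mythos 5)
Your plan correctly identifies the relevant ingredients (run-decomposition of the sign $\bm{\varsigma}$, disjoint ``templates,'' interleaving, and the monochromaticity from Lemma~\ref{lem-samesig}), and your ``main obstacle'' paragraph accurately locates the crux of the problem. But the proposal then stops precisely at that crux: you observe that a middle set $U_j$ must simultaneously act as the $I_2$-pattern (relative to $U_i$, $i<j$) and as the $I_1$-pattern (relative to $U_\ell$, $\ell>j$), say this ``requires compatibility,'' and leave it there. That is the entire content of the lemma, so this counts as a genuine gap rather than a complete proof.

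Moreover, one of your working assumptions actively blocks a resolution. You argue that $|U_i\cup U_j|\ge k$ is ``automatic since $U_i\cup U_j$ has exactly $k$ elements by construction.'' Insisting that every pairwise union be exactly a $k$-subset realizing the fixed run pattern $\bm{\varsigma}$ is too rigid: it forces, for every pair, that the $|U_i|$ together with the interleaving reproduce $(s_1,\dots,s_\ell)$ on the nose, and you cannot do this consistently for $m\ge 3$ when the same set must supply odd-indexed runs in one pairing and even-indexed runs in another. The paper first proves an enlargement lemma (Lemma~\ref{lem11}) showing that the run sizes $s_i$ may be replaced by any $t_i\ge s_i$ of the same parity while the triple structure persists; this is what makes $|U_i\cup U_j|\ge k$ (rather than $=k$) the right condition. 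With that freedom, the paper chooses $(t_1,\dots,t_\ell)$ to be a \emph{balanced} sequence, defines the partial alternating sums $\mu_j$ (so $\mu_{j-1}+\mu_j=t_j$), builds the extreme sets $U_1,U_m$ from blocks of sizes $t_{2j-1},t_{2j}$ and the interior sets $U_2,\dots,U_{m-1}$ from blocks of sizes $\mu_j$, and arranges them in a zig-zag along $[q]$ so that every pair's interleaving matches the enlarged pattern via Observation~\ref{obs-appl}. None of Lemma~\ref{lem11}, the balanced-sequence device, or the $\mu_j$ arithmetic appears in your proposal, and without them the ``compatibility'' you flag does not resolve.
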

In advance of the proof, we shall give a small example which illustrates how the subsets $U_i, \ i\in [4]$ in the lemma are constructed.
 \subsubsection{An example} 
Suppose that in the above, $k= 10$ and for all ${I \in \binom {[q]}{10}}$, $\varsigma(I) = (s_1, s_2, s_3) = (4,3,3).$   Then for all subsets $R_1 < R_2 < R_3$ of $[q]$ where $|R_i| = s_i, \ i = 1,2,3$, the triple of sets $(U_1, U_2,H)$, where $U_1 = R_1 \cup R_3$ and $U_2 = R_2$, 
is a $U_1 \cup U_2$-triple.  
Suppose now that we want to find four disjoint subsets $U_i \subset [q], \ i \in [4]$ such that for all $1 \le i < j \le 4,$ $(U_i, U_j, H)$ is $U_i \cup U_j$-triple where we only require that $|U_i \cup U_j| \ge 10.$  
To do this, we first use an observation that asserts that if subsets $R_1 < R_2 < R_3$ in $[q]$ are such that for $i = 1,2,3$, $|R_i| \ge s_i,$ and $|R_i|$ and $s_i$ have the same parity, then $(S_1, S_2, H)$ is a $S_1\cup S_2$-triple, where $S_1 = R_1 \cup R_3$ and $S_2 = S_2 = R_2.$  We shall exploit this observation in the next construction. 

Let $U_{1,1} < U_{2,1} < U_{3,1} < U_4 < U_{3.2} < U_{2,2} < U_{1,2}$ where $|U_{1,1}| = |U_{2,1}| = |U_{3,1}| =4,\ |U_4| = 7,$ and $|U_{1,2}| = |U_{2,2}| = |U_{3,2}| = 3.$  For $i= 1,2,3$ let $U_i = U_{i,1} \cup U_{i,2}.$
 Since for $i=1,2,3,$  $U_{i,1} < U_4 < U_{i,2}$, it follows from the above that for $i=1,2,3,$ $(U_i, U_4,H)$ is a $U_i \cup U_4$-triple.  Since for all $1 \le i < i' < 4,$  $U_{i,1} < U_{i',1} \cup U_{i',2} < U_{i,2}$ and $|U_{i'}| = |U_{i',1}| + |U_{i',2}| = 7,$ it also follows from the above that 
 $(U_i, U_{i'}, H)$ is an $U_i \cup U_{i'}$-triple. 

\subsubsection{A lemma for constructing pairs}

Suppose $I \in {\binom {[q]}k}.$  By assumption, $(I_1, I_2,H)$ is a $I$-triple and $\varsigma(I) = (s_1, \dots ,s_\ell).$  Let $\omega(0) = 0$ and for $i= 1, \dots ,\ell,$ let $\omega(i) = \sum_{j=1}^i s_j.$ Assuming that $I = \{ i_1, \dots ,i_k \}$ where $i_1 < i_2 < \cdots < i_k$, it follows that 
$I_1 = \bigcup_{j\in [\ell]^o} \{i_{\omega(j-1) +1}, \dots , i_{\omega(j)} \} $ and $I_2 =  \bigcup_{j\in [\ell]^e} \{ i_{\omega(j-1)+1}, \dots , i_{\omega(j)} \}$.
Our first task is to show that one can replace the sets $\{ i_{\omega(j-1)+1}, \dots , i_{\omega(j)} \}$ in $I_1$ or $I_2$ with larger sets so as to achieve sets $I_1', I_2'$ for which $(I_1', I_2')$ is a $I_1' \cup I_2'$-pair.
This gives us the needed flexibility when constructing the sets $U_1, \dots ,U_m$ described in Lemma \ref{lem-disjointtrips}.

Let $\Q$ be the set of all $\ell$-tuples $(Q_1, \dots, Q_\ell) \in \left(2^{[q]}\right)^\ell$ where $Q_1 < Q_2 < \cdots < Q_{\ell}$ and for all $i\in [\ell],$ $|Q_i| \ge s_i$ and $|Q_i|$ and $s_i$ have the same parity.

\begin{lemma}
For all $(Q_1, \dots ,Q_\ell) \in \Q,$ if $R_1 = \bigcup_{i\in [\ell]^o}Q_i,$ $R_2 =  \bigcup_{i\in [\ell]^e}Q_i$ and $R= R_1 \cup R_2$, then
$(R_1,R_2,H)$ is an $R$-triple.
\label{lem11}
\end{lemma}

\begin{proof}
We shall use induction on $\sum_{i=1}^\ell |Q_i|.$  When $\sum_{i=1}^\ell |Q_i| = \sum_{i=1}^\ell s_i$, that is, for all $i$, $|Q_i| = s_i,$ the lemma is true by assumption.  Suppose that the lemma is true for all $(Q_1, \dots ,Q_\ell) \in \Q$ where 
$\sum_{i=1}^\ell |Q_i| = \alpha\ge \sum_{i=1}^{\ell}s_i.$  Let $(Q_1, \dots ,Q_\ell) \in \Q$ where $\sum_{i=1}^\ell |Q_i| = \alpha + 2.$ Let $R_1 = \bigcup_{i\in [\ell]^o}Q_i,$ $R_2 =  \bigcup_{i\in [\ell]^e}Q_i$ and $R= R_1 \cup R_2.$
For some $i,$ $|Q_i| \ge s_i +2$ and without loss of generality we may assume this is true for $i=1.$  Let $Q_1 = \{ q_1, \dots ,q_t \}$ where $q_1 < q_2 < \cdots < q_t$.
For $j= 0,1,2,$ let $Q_1^j = Q_1 -  \{ q_{t -2}, q_{t-1}, q_t \} + q_{t -j}$ and  $R_1^j = (R_1 - Q_1) \cup Q_1^j$ and $R^j = R_1^j \cup R_2.$  For $j=0,1,2$ we have $Q_1^j < Q_2 < \cdots  < Q_{\ell}$ and $(Q_1^j, Q_2, \dots ,Q_\ell) \in \Q.$  It follows by the inductive assumption that for $j= 0,1,2,$ $(R_1^j, R_2, H)$ is an $R^j$-triple.  For $j=0,1,2,$ let $G_1^j$ and $G_2^j$ be pairs of disjoint cycles where for $j= 0,1,2,$ $G_1^j + G_2^j = D_{R^j} + C_1$ and $G_1^j \cap X = R_1^j,\ G_2^j \cap X = R_2,$ $G_1^j + D_{R_1^j} = H.$
Then for $j=0,1,2,$ $G_2^j + D_{R_2} = H + C_1$ and hence $G_2^0 = G_2^1 = G_2^2 = D_{R_2} + H + C_1.$ 
In addition, we have that  $$\sum_{j=0}^2 G_1^j + G_2^0  = \sum_{j=0}^2 (G_1^j + G_2^j) = \sum_{j=0}^2(D_{R^j} + C_1) =  D_R + C_1.$$  Now $D_R + C_1$ is the disjoint union of cycles  $G_1 = \sum_{j=0}^2 G_1^j$ and $G_2 = G_2^0$ where for $i = 1,2,$ $G_i \cap X = R_i.$  We also have that $G_2 + D_{R_2} = G_2^0 + D_{R_2} = H + C_1$ and thus $(R_1, R_2, H)$ is an $R$-triple.  The proof now follows by induction.  
\end{proof}

\subsubsection{Balanced sequences} 

We say that a finite sequence of integers $b_1, b_2, \dots ,b_m$ is {\bf balanced} if\\

\begin{itemize}
\item[B1)] $b_{m} - b_{m -1} + \cdots + (-1)^{m +1} b_1=0$ and 
\item[B2)] for $i = 1, \dots ,m-1$, $b_i - b_{i-1} + \cdots + (-1)^{i+1}b_1 \ge 0.$
\end{itemize}

Assume that $k$ is even and let $I \in {\binom {[q]}k}$ where $\varsigma(I) = (s_1, \dots ,s_\ell).$ 
Let $t_i,\ i \in [\ell],$ be positive integers chosen such that 
for $i=1, \dots, \ell$,  $t_i \ge s_i$ and $s_i$ and $t_i$ have the same parity.  Given that $\sum_i s_i =k$ is even, it is a straightforward exercise to show that $t_1, \dots ,t_{\ell}$ can be chosen so that it is a balanced sequence.   We shall define integers $\mu_i, \ i = 0,1, \dots ,\ell$ as follows: let $\mu_0 := 0,$ and for $i=1, \dots , \ell,$ $\mu_i := t_i - t_{i-1} + \cdots +  (-1)^{i+1}t_1.$  Since $t_1, \dots ,t_{\ell}$ is balanced, B1) implies that $\mu_\ell =0,$ and B2) implies that for all $i\in [\ell -1],$ $\mu_i \ge 0.$  The sequence $\mu_i, \ i = 0, \dots ,\ell$ has the favourable property that 
$i=1, 2, \dots ,\ell$, $\mu_{i-1} + \mu_i = t_i$.
As a consequence of Lemma \ref{lem11}, we have the following:  

\begin{observation}
Let $Q_1 < Q_2 < \cdots < Q_{\ell}$ be disjoint subsets of $[q]$, where for all $i\in [\ell], \ |Q_i| = t_i.$  Then for $R_1 = \bigcup_{i\in [\ell]^o}Q_i,$ $R_2 =  \bigcup_{i\in [\ell]^e}Q_i$, and $R = R_1 \cup R_2$, we have that $(R_1,R_2,H)$ is an $R$-triple.\label{obs-appl}
\end{observation}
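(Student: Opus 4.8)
The statement to prove is Observation~\ref{obs-appl}: given disjoint blocks $Q_1 < Q_2 < \cdots < Q_\ell$ of $[q]$ with $|Q_i| = t_i$, the partition $R_1 = \bigcup_{i\in[\ell]^o}Q_i$, $R_2 = \bigcup_{i\in[\ell]^e}Q_i$ yields an $R$-triple $(R_1,R_2,H)$. The key point is that this is essentially a restatement of Lemma~\ref{lem11}, but one must check that the hypotheses of Lemma~\ref{lem11} are satisfied, namely that each $t_i \ge s_i$ and that $t_i \equiv s_i \pmod 2$. This is exactly how the integers $t_i$ were chosen in the paragraph immediately preceding the observation: they were taken with $t_i \ge s_i$, with $t_i$ and $s_i$ of the same parity, and additionally so that $t_1,\dots,t_\ell$ is a balanced sequence (the balancedness is what makes the $\mu_i$ nonnegative, but it is not needed for the triple conclusion itself).

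So the proof is short. First I would recall that $\varsigma(I) = (s_1,\dots,s_\ell)$ is fixed for all $k$-subsets $I$ of $Q = [q]$, and that the integers $t_1,\dots,t_\ell$ have been fixed with $t_i \ge s_i$ and $t_i \equiv s_i \pmod 2$ for each $i\in[\ell]$. Then, given the disjoint blocks $Q_1 < \cdots < Q_\ell$ with $|Q_i| = t_i$, write $Q_i = \{q_{i1},\dots,q_{it_i}\}$ in increasing order; these are precisely the data required by Lemma~\ref{lem11}. Applying that lemma directly gives that $(R_1,R_2,H)$ is an $R$-triple, where $R_1 = \bigcup_{i\in[\ell]^o}Q_i$, $R_2 = \bigcup_{i\in[\ell]^e}Q_i$, $R = R_1\cup R_2$. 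That is all.

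**Where the real work is.** There is essentially no obstacle here, because all the substance has already been absorbed into Lemma~\ref{lem11} and into the choice of the $t_i$. The only thing to be slightly careful about is to confirm that the ``$H$'' appearing in Lemma~\ref{lem11} is the same $H$ as in the observation: in Lemma~\ref{lem11}, $H$ is defined via the $I$-triple of $I = \bigcup_i P_i$ where $P_i$ consists of the first $s_i$ elements of $Q_i$, and since $I \in \binom{[q]}{k}$ and $\bm\tau(I)$, $H_I$ are constant on $\binom{Q}{k}$ by our choice of $Q = [q]$, this $H$ agrees with the fixed $H$ of the observation. I would state this explicitly so the reader is not left to re-verify it, and then the observation follows.
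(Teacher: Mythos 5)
Your proof is correct and matches the paper's approach: Observation~\ref{obs-appl} is an immediate application of Lemma~\ref{lem11}, and the paper itself proves it just by noting it is ``a consequence of Lemma~\ref{lem11}'' after having fixed the $t_i$ with $t_i \ge s_i$ and $t_i \equiv s_i \pmod 2$. Your added remark that the $H$ in Lemma~\ref{lem11} coincides with the fixed $H$ of Lemma~\ref{lem-samesig} (because $H_I$ is constant on $\binom{Q}{k}$) is a sensible clarification, but nothing is missing.
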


\subsubsection{Proof of Lemma \ref{lem-disjointtrips}}\label{sec-disjointtrips}

We shall complete the proof of Lemma \ref{lem-disjointtrips}.  We shall assume that $k$ is even. For convenience, we shall assume $\ell$ is even; the construction in the case where $\ell$ is odd being very similar.

Our objective is to construct pairwise disjoint sets $U_i \subset [q], \ i \in [4]$ such that  for all $1 \le i < j \le 4,$ $(U_i, U_j, H)$ is a $(U_i \cup U_j)$-triple and $|U_i \cup U_j| \ge k.$  Throughout, we may assume that $q$ is as large as needed.
To begin with, for all $i\in \{ 1, 4 \},$ $U_i$ will be the disjoint union of subsets $\displaystyle{U_i = \bigcup_{j=1}^{ \frac {\ell}2}U_{i,j}}$ and 
for all $i\in \{ 2,3 \},$ $U_i$ will be the disjoint union of subsets $\displaystyle{U_i = \bigcup_{j=1}^{\ell}U_{i,j}}.$

The sizes of the sets $U_{i,j}$ are defined such that for $j= 1, \dots , \frac {\ell}2,$ $|U_{1,j}| = t_{2j-1},$ $|U_{4, j}| = t_{2j},$ and for all $i \in \{ 2, 3 \}$ and $j\in [\ell]$, $|U_{i,j}| = \mu_j$
Note that since $\mu_\ell = 0,$ we have that for all $i \in \{ 2,3 \},$ $U_{i,\ell} = \emptyset.$  Moreover, for all $i\in \{ 2,3 \}$ and $j\in [2,\ell], \ |U_{i,j-1} \cup U_{i,j}| = \mu_{j-1} + \mu_j = t_j.$
In addition, the sets $U_{i,j}$ are chosen such that: 
\begin{itemize}
\item $U_{1,1} < U_{4,1} < U_{1,2} < U_{4,2} < U_{1,3} < U_{4,3} < \cdots < U_{1,\frac {\ell}2} < U_{4, \frac {\ell}2}.$
\item For all  $j \in [\ell]^o$, $U_{1, \frac {j+1}2} < U_{2,j} < U_{3,j} < U_{4, \frac {j+1}2}.$
\item  For all  $j \in [\ell-2]^e$, $U_{4, \frac j2}< U_{3,j} < U_{2,j} <U_{1,\frac j2 +1}$
\end{itemize}

For example, 
in the case where $\ell =4$, the sets $U_{i,j}$ are such that
$$U_{1,1} < U_{2,1} < U_{3,1} < U_{4,1} < U_{3,2} < U_{2,2} < U_{1,2} < U_{2,3} < U_{3,3} < U_{4,2}.$$

%
%
Since $U_{1,1} < U_{4, 1} < U_{1,2} < U_{4, 2} < \cdots < U_{1,\frac {\ell}2} < U_{4, \frac {\ell}2}$ and for $j=1, \dots ,\frac {\ell}2,$ $|U_{1,j}| = t_{2j-1}$ and $|U_{4,j}| = t_{2j},$
it follows from Observation \ref{obs-appl} that $(U_1, U_{4}, H)$ is a $U_1 \cup U_4$-triple.  We also see that $|U_1 \cup U_4| = \sum_i t_i \ge \sum_i s_i = k.$
We have that $$U_{2,1} < U_{3,1} \cup U_{3, 2} < U_{2,2} \cup U_{2,3} < U_{3,3} \cup U_{3,4} < \cdots < U_{2, \ell-2} \cup U_{2, \ell-1} < U_{3, \ell-1} .$$
Moreover, since $|U_{2,1}| = t_1,$ and for $j = 1,2, \dots ,\frac {\ell}2$, and $$|U_{2, 2j} \cup U_{2,2j+1}| = \mu_{2j} + \mu_{2j+1} = t_{2j+1},\ \ |U_{3,2j-1} \cup U_{3, 2j}| = \mu_{2j-1} + \mu_{2j} = t_{2j},$$ it follows from Observation \ref{obs-appl} that $(U_2, U_{3}, H)$ is a $(U_2 \cup U_{3})$-triple.  Moreover, $|U_2 \cup U_3| = \sum_i t_i \ge k.$

Let $i\in \{2,3 \}.$ Then we have $$U_{1,1} < U_{i,1} \cup U_{i,2} < U_{1,2} < U_{i,3} \cup U_{i,4} < \cdots < U_{1, \frac {\ell}2} < U_{i,\ell-1}.$$
Since for all $j\in [\frac {\ell}2],$ $|U_{1,j}| = t_{2j-1}$ and $|U_{i, 2j-1} \cup U_{i, 2j}| = t_{2j}$, it follows from Observation \ref{obs-appl} that $(U_1, U_i, H)$ is a $(U_1 \cup U_i)$-triple.  Moreover, $|U_1 \cup U_i| = \sum_i t_i \ge k.$
Similarly, we have $$U_{i,1} < U_{4,1} < U_{i,2} \cup U_{i,3} < U_{4,2} < U_{i,4} \cup U_{i,5} < \cdots < U_{i, \ell-2} \cup U_{i, \ell -1} < U_{4, \frac {\ell}2}.$$
Since $|U_{i,1}| = t_1,$ $|U_{4,1}| = t_2,$ and for all $j\in [2, \frac {\ell}2]$, $|U_{i, 2j-2} \cup U_{i, 2j-1}| = t_{2j-1}$ and $|U_{4,j}| = t_{2j},$ it follows  that $(U_i, U_4, H)$ is a $(U_i \cup U_4)$-triple where $|U_i \cup U_4| \ge k.$

From the above, $U_i,\ i \in [4]$ are seen to be the desired sets.  This completes the proof of Lemma \ref{lem-disjointtrips}.


\section{Completing the proof of Theorem \ref{the-main}}

We may assume that $k$ is even.  Recall that by Observations \ref{obs5.5} and \ref{obs6.1},  $B_1' = T_p$ and $B_2' = I_p$.  By Lemma \ref{lem-disjointtrips}, there are pairwise disjoint subsets $U_i \subset [q],\ i \in [4]$ such that for all $1 \le i < j \le 4,$ $(U_i,U_j,H)$ is an $U_i \cup U_j$-triple and $|U_i \cup U_j| \ge k.$  
Thus we have:

\begin{itemize}
\item[A)] For all $1 \le i < j \le 4,$ $D_{U_i}^1 \subandsup D_{U_j}^1$ and

\item[B)] for all $1 \le i < j \le 4,$ $D_{U_i}^2 \parallelbowtie D_{U_j}^2$.
\end{itemize}

It follows by (A) that the sets $D_{U_i}^1,\ i \in [4]$ are totally ordered by inclusion.  Thus we may assume that for all $1 \le i < j \le 4,$ $D_{U_i}^1 \subset D_{U_j}^1.$
%
%
Let $V_1 = U_1 \cup U_2$ and let $V_2 = U_{3} \cup U_4$.  Then $D_{V_1} = D_{U_1} + D_{U_2}$ and $D_{V_2} = D_{U_{3}} + D_{U_4}.$ We have that $D_{V_1}^1 \parallel D_{V_2}^1.$   Furthermore, by B), it follows that $D_{V_1}^2 \parallel D_{V_2}^2$ and hence $D_{V_1}$ and  $D_{V_2}$ are disjoint cycles.   Suppose $D_{V_1} + C_1 + C_2$ is not a circuit.  Then Lemma \ref{lem-useful2} implies that there is a partition $V_1 = S_1 \dot{\cup} S_2$ such that for $i =1,2,$ $D_{S_1}^i \subandsup D_{S_2}^i.$  However, given that $B_2' = I_q$, it follows that $D_{S_1}^2 \not\subandsup D_{S_2}^2.$  Thus $D_{V_1} + C_1 + C_2$ is a circuit and the same applies to $D_{V_2} + C_1 + C_2.$
Since for $i=1,2$, $|V_i| \ge k,$ we now see that the circuits $C_i' = D_{V_i},\ i = 1,2$ satisfy S2) (with $N$ in place of $M$).
This completes the proof.


\begin{thebibliography}{99}
\bibitem{AnsLu} R. P. Anstee, L. Lu, {\em Unavoidable multicoloured families of configurations}, arXiv:1409.4123, 2014.
\bibitem{BalBol} J. Balogh, B. Bollob\'{a}s, {\em Unavoidable traces of set systems}, Combinatorica, {\bf 25}, 2005, 633-645.
\bibitem{Bon} J. Bondy, Basic Graph Theory: Paths and circuits, In: {\em Handbook of Combinatorics}, R.L. Graham, M. Gr\"{o}tschel, and L. Lov\'{a}s, Eds., Vol. 1, pp. 3 - 110, Elsevier, Amsterdam, MIT Press, Cambridge, 1996.
\bibitem{CheFauGou} G. Chen, R. Faudree, R. Gould, {\em Intersections of longest cycles in k-connected graphs}, J. Comb. Theory Series B {\bf 72} (1), 1998, 143-149.
\bibitem{Dil} R.P. Dilworth, {\em A decomposition theorem for partially ordered sets}, Ann. Math. {\bf 51}, 161-166, 1950.
\bibitem{GeeGerWhi} J. Geelen, B. Gerards, G. Whittle, {\em Excluding a planar graph from GF(q)-representable matroids}, J. Comb. Theory Series B, {\bf 97}, 2007, 971-998.
\bibitem{GeeNelWal} J. Geelen, P. Nelson, Z. Walsh, {\em Excluding a line from complex-representable matroids}, arXiv:2101.12000, to appear in Mem. of AMS.
\bibitem{Gro} M. Gr\"{o}tschel, {\em On intersections of longest cycles}, In: Graph Theory and Combinatorics (Cambridge, 1983), pp. 171-189, Academic press, London (1984).
\bibitem{Mcg} S. McGuinness, {\em Circuits through cocircuits in a graph with extensions to matroids}, Combinatorica {\bf 25} (4), 2005, 451-463.
\bibitem{McmReiWei} N. McMurray, T. Reid, B. Wei, H. Wu, {\em Largest circuits in matroids}, Adv. applied Math, {\bf 34} (1), 2005, 213 - 216.
\bibitem{MubSuk} D. Mubayi, A. Suk, {\em A Survey of hypergraph Ramsey problems}, In: Discrete mathematics and applications, SOIA, {\bf 165}, pp. 405-428, Springer, 2020.
\bibitem{Oxl} J. Oxley, {\em Matroid Theory}, 2nd. ed., Oxford University Press, Oxford, 2011.
\bibitem{She} L. Sheppardson, {\em Intersections of largest bonds in k-connected graphs}, Discrete Math, {\bf 309} (12), 2009, 4116-4122.
\bibitem{Tut} W.T. Tutte, {\em Menger's theorem for matroids}, J. Res. Natl. Bureau of Standards - B.Math. Math. Phys. 69 B, 1965, 49-53.
\end{thebibliography}
\end{document}